\documentclass[11pt]{article}

\usepackage{amsmath,amssymb,amsthm,mathtools}
\usepackage[margin=1in]{geometry}
\usepackage{enumitem}

\newtheorem{theorem}{Theorem}
\newtheorem{proposition}{Proposition}
\newtheorem{lemma}{Lemma}
\newtheorem{remark}{Remark}
\theoremstyle{definition}
\newtheorem{property}{Property}

\numberwithin{equation}{section}

\newcommand{\R}{\mathbb R}
\newcommand{\C}{\mathbb C}
\newcommand{\F}{\mathcal F}
\newcommand{\Chat}{\widehat C}
\newcommand{\norm}[1]{\left\lVert #1\right\rVert}
\newcommand{\rank}{\operatorname{rank}}
\newcommand{\Ran}{\operatorname{Ran}}

\title{Endpoint Asymptotics of Optimal Stabilization Prefactors}
\author{Changqin Quan\\
Graduate School of System Informatics, Kobe University, Kobe, Japan
\and
Gengsheng Wang\\
Hetao Institute of Mathematics and Interdisciplinary Studies, Shenzhen, China}
\date{}

\begin{document}
\maketitle
\begin{abstract}
Consider the stabilizable finite-dimensional linear control system
$\dot x=Ax+Bu$.
For a prescribed decay rate $\delta>0$, we define the optimal stabilization
prefactor
$$
\Chat(\delta)
:=
\inf_{K\in\R^{m\times n}}
\sup_{t\ge0}e^{\delta t}\norm{e^{(A+BK)t}}.
$$
We determine its asymptotic behavior as $\delta$ approaches the right
endpoint of the achievable decay-rate range. If $(A,B)$ is controllable
and $\mu$ is its largest controllability index, then
$\Chat(\delta)\asymp\delta^{\mu-1}$ as $\delta\to+\infty$; whereas if
$(A,B)$ is stabilizable but not controllable, then
$\Chat(\delta)\asymp(\delta_*-\delta)^{-(q-1)}$ as
$\delta\uparrow\delta_*$, where $\delta_*$ is the supremal achievable
decay rate and $q$ is the largest size of a Jordan block of the
uncontrollable part associated with the spectral boundary
$\operatorname{Re}\lambda=-\delta_*$.
Thus, in both cases, the endpoint asymptotic order of $\Chat(\delta)$ is
determined by the corresponding structural invariant---$\mu$ in the
controllable case and $q$ in the noncontrollable case---and conversely this
order recovers that invariant.
\end{abstract}

\section{Introduction}
\label{sec:introduction}

Consider the finite-dimensional linear control system
\begin{equation}\label{eq:system}
    \dot x(t)=Ax(t)+Bu(t),
    \qquad
    A\in\R^{n\times n},\quad B\in\R^{n\times m},
    \qquad t\ge0,
\end{equation}
where $x(t)\in\R^n$ is the state and $u(t)\in\R^m$ is the control.
We assume throughout that $(A,B)$ is stabilizable, endow $\R^n$ with the
Euclidean norm, and use the same notation $\norm{\cdot}$ for the induced
operator norm on matrices.
\vskip 5pt
\noindent{\bf Motivation and significance} Under a constant state feedback
$u=Kx$, an exponential stabilization estimate has the form
\begin{equation}\label{eq:intro-estimate}
    \norm{e^{(A+BK)t}}\le C e^{-\delta t},
    \qquad t\ge0,
\end{equation}
where $\delta>0$ is the prescribed decay rate and $C\ge1$ is the
corresponding transient prefactor. The two quantities describe different
features of the closed-loop dynamics: $\delta$ governs the asymptotic rate
of decay, whereas $C$ controls the transient amplification compatible with
that rate. Pole placement and high-gain feedback address how rapidly the
closed-loop system can be made to decay, but they do not determine the
optimal transient prefactor associated with a prescribed decay rate. This
leads naturally to the following definition.

For $\delta>0$, we define the \emph{optimal stabilization prefactor}
\begin{equation}\label{eq:Chat}
    \Chat(\delta)
    :=
    \inf_{K\in\R^{m\times n}}
    \sup_{t\ge0}e^{\delta t}\norm{e^{(A+BK)t}},
    \qquad \delta>0.
\end{equation}
For $\delta>0$ and $K\in\R^{m\times n}$, set
\begin{equation}\label{eq:CdeltaK}
    C(\delta,K)
    :=
    \sup_{t\ge0}e^{\delta t}\norm{e^{(A+BK)t}}
    =
    \sup_{t\ge0}\norm{e^{(A+BK+\delta I_n)t}}
    \in[1,+\infty].
\end{equation}
Then
\begin{equation}\label{eq:Chat-via-CdeltaK}
    \Chat(\delta)
    =
    \inf_{K\in\R^{m\times n}}C(\delta,K).
\end{equation}
Thus, for a prescribed decay rate $\delta$, $\Chat(\delta)$ is the optimal
transient prefactor over all constant state feedbacks. When finite,
$C(\delta,K)$ is the least constant for which
\eqref{eq:intro-estimate} holds for the fixed feedback $K$. Moreover,
\eqref{eq:CdeltaK} shows that $C(\delta,K)$ is precisely the maximum
transient amplification of the shifted closed-loop matrix
$A+BK+\delta I_n$.

Whidborne and McKernan~\cite{WhidborneMcKernan2007} studied maximum
transient energy growth and its reduction by feedback. If the state energy
is measured by $\norm{x}^2$ and
$G(M):=\sup_{t\ge0}\norm{e^{Mt}}^2$, then
$G(A+BK+\delta I_n)=C(\delta,K)^2$. More recently, Apkarian and
Noll~\cite{ApkarianNoll2025} studied closed-loop transient mitigation by
optimizing a Kreiss-system-norm criterion, which is related to but different
from $C(\delta,K)$. Thus these works concern $C(\delta,K)$ or related
transient measures, but not the feedback-optimized quantity
$\Chat(\delta)$. In particular,
neither work studies the function $\delta\mapsto\Chat(\delta)$ or its
asymptotic behavior at the right endpoint of the achievable decay-rate
range.

The optimization over all constant state feedbacks is essential in our
problem. For each prescribed decay rate $\delta$, the quantity
$\Chat(\delta)$ gives the best transient prefactor achievable by constant
state feedback.

Equally important is the behavior of this optimal value as $\delta$
approaches the right endpoint of the achievable decay-rate range. In this
endpoint limit, $\Chat(\delta)$ may either remain bounded or blow up. In the
latter case, its asymptotic behavior quantifies the rate of blow-up. Since
the infimum in \eqref{eq:Chat} is taken over all constant state feedbacks,
with no constraint on the feedback gain, this endpoint behavior is an
intrinsic feature of the feedback stabilization problem, rather than an
artifact of a particular feedback construction.

We further ask what structural information is encoded in this endpoint
behavior. More specifically, we ask whether it is determined by intrinsic
invariants of the pair $(A,B)$ and, conversely, whether these invariants can
be recovered from the optimal stabilization performance. For a controllable
pair, the right endpoint of the achievable decay-rate range is $+\infty$;
for a stabilizable but uncontrollable pair, it is finite and is determined
by the uncontrollable dynamics. These two endpoint regimes lead naturally
to two separate analyses.
\vskip 5pt

\noindent {\bf Main results} We begin with the controllable case. Define
\begin{equation}\label{eq:mu-rank}
    \mu
    :=
    \min\left\{
    j\ge1:
    \rank[B,AB,\ldots,A^{j-1}B]=n
    \right\}.
\end{equation}
As recalled in Subsection~\ref{sec:controllability-indices}, $\mu$ is the
largest controllability index of $(A,B)$, equivalently the length of its
longest controllability chain.

\begin{theorem}
\label{thm:main-asymptotic}
Assume that $(A,B)$ is controllable. Let $\mu$ be the largest controllability
index of $(A,B)$, defined in \eqref{eq:mu-rank}. Then there exist constants
$c_0,C_0,\delta_0>0$, depending only on $(A,B)$, such that
\begin{equation}\label{eq:main-asymptotic}
c_0\,\delta^{\mu-1}
\le
\Chat(\delta)
\le
C_0\,\delta^{\mu-1},
\qquad
\delta\ge\delta_0.
\end{equation}
Moreover,
\begin{equation}\label{eq:log-slope}
\lim_{\delta\to+\infty}
\frac{\log\Chat(\delta)}{\log\delta}
=
\mu-1.
\end{equation}
If $\mu=1$, equivalently $\rank B=n$, then
\begin{equation}\label{eq:mu-one-exact}
\Chat(\delta)=1,
\qquad \delta>0.
\end{equation}
\end{theorem}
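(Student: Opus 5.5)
The plan is to prove the upper bound by an explicit construction in Brunovsky-type coordinates, the lower bound by a Taylor-expansion argument at small times, and then to read off \eqref{eq:log-slope} and the case $\mu=1$.

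\textbf{Upper bound.} Choose an invertible $T$ and a feedback $K_1$ that bring $(A,B)$ to Brunovsky canonical form. Concretely, $T^{-1}(A+BK_1)T$ is block diagonal with nilpotent shift blocks $N_i$ of sizes $\mu_1\ge\dots\ge\mu_m$, where $\mu_1=\mu$. The input matrix $T^{-1}BG$, for some invertible $G$, feeds the last coordinate of each chain. Since $T$ depends only on $(A,B)$, it costs only the factor $\kappa(T)=\norm{T}\norm{T^{-1}}$. It therefore suffices to treat one chain of length $k\le\mu$, that is, the integrator chain $\dot x_j=x_{j+1}$ for $j<k$ and $\dot x_k=u$.

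For this chain, use the scaling $D_\delta=\operatorname{diag}(1,\delta,\ldots,\delta^{k-1})$. Fix a Hurwitz companion matrix $H$ whose spectral abscissa is below $-2$, for instance one with all poles at $-3$. Choose the feedback so that the closed-loop matrix is $\delta D_\delta^{-1}HD_\delta$, which is a companion matrix with coefficients $\delta^{k-j}h_j$. Then
\[
e^{\delta t}e^{(A_{cl})t}=D_\delta^{-1}e^{(H+I)\delta t}D_\delta ,
\]
so that
\[
\sup_{t\ge0}\norm{e^{\delta t}e^{A_{cl}t}}\le \norm{D_\delta^{-1}}\,\norm{D_\delta}\,\sup_{s\ge0}\norm{e^{(H+I)s}}\le C\,\delta^{k-1}
\]
for $\delta\ge1$. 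Taking the block-diagonal combination of all chains gives $\Chat(\delta)\le C_0\delta^{\mu-1}$.

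\textbf{Lower bound (the main obstacle).} This step must hold uniformly over arbitrary gains $K$. Pick a vector $v$ at the end of a longest chain, that is, with $v\notin\Ran[B,\ldots,A^{\mu-2}B]$. Take a unit vector $w$ orthogonal to $\mathcal R_{\mu-1}:=\Ran[B,\ldots,A^{\mu-2}B]$ with $w^\top A^{\mu-1}b\ne0$ for some column $b$ of $B$; such $w$ exists because $\mathcal R_\mu=\R^n$. Write $M=A+BK$. The key structural fact is that $w^\top M^jB=w^\top A^jB$ for $j\le\mu-1$. This holds because $M^jB-A^jB\in\mathcal R_j\subseteq\mathcal R_{\mu-1}$.

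Let $x(t)=e^{Mt}x_0$. Then $y(t)=w^\top x(t)$ satisfies a relation of the form $y^{(j)}=w^\top A^j x+(\text{terms involving }Kx)$. I would avoid expanding in $K$ and argue by duality instead. Consider $\phi(t)=w^\top e^{Mt}$, so that $\phi(t)B=\sum_j \frac{t^j}{j!}w^\top M^jB$, whose low-order coefficients are prescribed independently of $K$. The cleanest route is the following. Suppose $\norm{e^{Mt}}\le Ce^{-\delta t}$. The input-to-state reachability Gramian-type identity shows that steering from $0$ requires the control to carry the $w$-component through $\mu$ integrations. Quantitatively, take the integral identity
\[
e^{Mt}=e^{At}+\int_0^t e^{A(t-s)}BKe^{Ms}\,ds
\]
and project with $w^\top$, using $w^\top e^{A\tau}B=O(\tau^{\mu-1})$. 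Choose the initial state in $\mathcal R_{\mu-1}^\perp$-related directions and evaluate at time $t\sim1/\delta$. Then $w^\top e^{At}x_0\approx w^\top x_0$ while the correction is at most $\norm{K}t^{\mu}C$, which is not directly useful.

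An alternative is an iterated-derivative bound. Define $z_j=w^\top M^{j}$ for $j<\mu$. Since each $z_j$ equals $w^\top A^j$ plus terms annihilated by $B$, pairing with suitable initial vectors gives a function $f(t)$ with $f(0)=1$ and $f^{(j)}(0)=O(1)$ independent of $K$ for $j<\mu$. In addition, $|f|\le Ce^{-\delta t}$, and $f^{(\mu)}$ is controlled through a Bernstein/Markov-type inequality that uses the exponential-decay bound on derivatives via Cauchy estimates. A function that decays like $e^{-\delta t}$ from value $1$ with bounded first $\mu-1$ derivatives forces $C\gtrsim\delta^{\mu-1}$, after rescaling $s=\delta t$ and a Landau–Kolmogorov inequality. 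I expect making this rigorous to be the hardest step. I would try the Kolmogorov inequality on $[0,1/\delta]$ first.

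\textbf{Conclusion.} Taking logarithms of \eqref{eq:main-asymptotic} gives \eqref{eq:log-slope}. If $\mu=1$, then $B$ is surjective, so one can choose $K$ with $A+BK=-\delta I$, giving $C(\delta,K)=1$; since $C(\delta,K)\ge1$ for every $K$ (take $t=0$), this yields $\Chat(\delta)=1$.
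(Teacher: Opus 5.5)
Your upper bound, the log-slope limit, and the case $\mu=1$ follow the paper's route: Brunovsk\'y coordinates at the cost of $\norm{T}\norm{T^{-1}}$, then diagonal scaling of a fixed Hurwitz closed loop. There is one harmless slip. For $\dot x_j=x_{j+1}$, $\dot x_k=u$ and $D_\delta=\operatorname{diag}(1,\delta,\ldots,\delta^{k-1})$, the closed loop realizable by feedback is $\delta D_\delta HD_\delta^{-1}$. Your $\delta D_\delta^{-1}HD_\delta$ has superdiagonal entries $\delta^2$. Since $\norm{D_\delta}\norm{D_\delta^{-1}}=\delta^{k-1}$ either way, the estimate is unaffected.

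The genuine gap is the lower bound. You do not actually give an argument, and the one concrete principle you propose is false. Decay $|f(t)|\le Ce^{-\delta t}$, together with $f(0)=1$ and $K$-independent derivatives at $t=0$, cannot force $C\gtrsim\delta^{\mu-1}$. Take $h(s)=e^{-s}\sum_{k=0}^{\mu-1}s^k/k!$, so that $h'(s)=-e^{-s}s^{\mu-1}/(\mu-1)!$. Then $f(t):=h(2\delta t)$ satisfies $f(0)=1$ and $f^{(j)}(0)=0$ for $1\le j\le\mu-1$. It also satisfies $|f(t)|\le c_\mu e^{-\delta t}$ with $c_\mu$ independent of $\delta$. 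Landau--Kolmogorov or Cauchy estimates cannot rescue small-time Taylor information either: $f^{(\mu)}$ and all higher derivatives involve $K$, which is unbounded, so there is nothing uniform to feed them.

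What you are missing is that the feedback invariance holds exactly and for all times. Take a unit $w\perp\mathcal R_{\mu-1}$, which exists since $\mathcal R_{\mu-1}\ne\R^n$. Then $w^\top A^iB=0$ for $i\le\mu-2$, so induction gives $w^\top M^j=w^\top A^j$ \emph{exactly} for $j\le\mu-1$; the ``terms annihilated by $B$'' you mention are zero. Hence $f(t):=w^\top e^{Mt}w$ satisfies $f^{(\mu-1)}(t)=w^\top A^{\mu-1}e^{Mt}w$. This derivative obeys the decay bound $\norm{A^{\mu-1}}\,C(\delta,K)e^{-\delta t}$ on all of $[0,\infty)$. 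Since $M$ is Hurwitz, all derivatives of $f$ vanish at infinity, and integrating $\mu-1$ times back from $\infty$ gives
\[
1=|f(0)|=\frac{1}{(\mu-2)!}\Bigl|\int_0^\infty s^{\mu-2}f^{(\mu-1)}(s)\,ds\Bigr|\le\norm{A^{\mu-1}}\,C(\delta,K)\,\delta^{-(\mu-1)}.
\]
This is the scalar form of the paper's argument. In Brunovsk\'y coordinates the feedback-unaffected chain equations give $e_1^\top M^{-(\mu-1)}=e_\mu^\top$, so $\norm{M^{-(\mu-1)}}\ge1$. The Laplace formula $M^{-(\mu-1)}=\frac{(-1)^{\mu-1}}{(\mu-2)!}\int_0^\infty t^{\mu-2}e^{Mt}\,dt$ then bounds $\norm{M^{-(\mu-1)}}$ by $C(\delta,K)\delta^{-(\mu-1)}$. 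In both versions the information comes from integrating the decay bound over the whole half-line, effectively at time scale $t\sim1/\delta$, not from behaviour near $t=0$. The version with $w$ even avoids the Brunovsk\'y transformation for the lower bound, at the price of the constant $\norm{A^{\mu-1}}^{-1}$ in place of $\kappa^{-1}$.
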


\begin{remark}
$(i)$
To the best of our knowledge, Theorem~\ref{thm:main-asymptotic} is new. It
determines the asymptotic behavior of the optimal stabilization prefactor
after optimization over all constant state feedbacks. Since
\eqref{eq:main-asymptotic} is two-sided, $\delta^{\mu-1}$ is the sharp
growth order and cannot be improved by any constant state feedback. More
significantly, \eqref{eq:main-asymptotic} and \eqref{eq:log-slope} together
establish a two-way characterization: the largest controllability index
$\mu$, an algebraic feedback invariant, determines the endpoint asymptotic
order of $\Chat(\delta)$, and conversely this order recovers $\mu$.

$(ii)$
When $\mu\ge2$, the lower bound in
Theorem~\ref{thm:main-asymptotic} is based on a feedback-invariant relation
along a longest Brunovsk\'y chain. For the closed-loop matrix in Brunovsk\'y
coordinates, this relation yields a feedback-independent identity for its
$(\mu-1)$st negative power. The Laplace representation of this negative
power, combined with the prescribed decay estimate, then yields the lower
bound of order $\delta^{\mu-1}$ for $C(\delta,K)$. The case $\mu=1$ is
covered by the exact identity \eqref{eq:mu-one-exact}.
\end{remark}

The controllable case corresponds to an infinite achievable-rate endpoint.
We next turn to the stabilizable but uncontrollable case. Here the optimal
stabilization prefactor has a finite endpoint, and its endpoint asymptotics
are governed by a structural invariant of the uncontrollable dynamics. Let
$A_{\rm u}$ denote the uncontrollable block in an orthogonal
Kalman decomposition and set
\begin{equation}\label{eq:deltastar}
    \delta_*
    :=
    -\max\{\operatorname{Re}\lambda:\lambda\in\sigma(A_{\rm u})\}>0.
\end{equation}
The spectrum and Jordan block sizes of $A_{\rm u}$ do not depend on the
choice of orthogonal Kalman decomposition; this standard fact is recalled in
Section~\ref{sec:finite}.

\begin{theorem}
\label{thm:finite-endpoint}
Assume that $(A,B)$ is stabilizable but not controllable. Let $q$ be the
largest size of a Jordan block in the complex Jordan form of $A_{\rm u}$
associated with an eigenvalue $\lambda$ satisfying
$\operatorname{Re}\lambda=-\delta_*$, where $\delta_*$ is defined in
\eqref{eq:deltastar}. Then there exist constants
$c_0,C_0,\varepsilon_0>0$, depending only on $(A,B)$, such that
\begin{equation}\label{eq:finite-endpoint-asymptotic}
    c_0(\delta_*-\delta)^{-(q-1)}
    \le
    \Chat(\delta)
    \le
    C_0(\delta_*-\delta)^{-(q-1)},
    \qquad 0<\delta_*-\delta\le\varepsilon_0.
\end{equation}
Moreover,
\begin{equation}\label{eq:recover-q}
    q
    =
    1+
    \lim_{\delta\uparrow\delta_*}
    \frac{\log\Chat(\delta)}
    {\log\bigl(1/(\delta_*-\delta)\bigr)}.
\end{equation}
Finally, the endpoint $\delta_*$ itself is achievable if and only if $q=1$;
equivalently,
\begin{equation}\label{eq:q-one-endpoint}
    \Chat(\delta_*)<+\infty
    \quad\Longleftrightarrow\quad
    q=1.
\end{equation}
\end{theorem}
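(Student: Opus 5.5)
We work in an orthogonal Kalman decomposition, since an orthogonal change of coordinates $x\mapsto U^{\top}x$ leaves both $\norm{\cdot}$ and $\Chat$ unchanged. Write
\[
A=\begin{pmatrix}A_{\rm c}&A_{12}\\0&A_{\rm u}\end{pmatrix},\qquad
B=\begin{pmatrix}B_{\rm c}\\0\end{pmatrix},
\]
with $(A_{\rm c},B_{\rm c})$ controllable. For any $K=(K_1,K_2)$ the closed-loop matrix is block upper triangular, $A+BK=\begin{pmatrix}A_{\rm c}+B_{\rm c}K_1&\ast\\0&A_{\rm u}\end{pmatrix}$.

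\emph{Lower bound.} The compression of $e^{(A+BK)t}$ to the lower-right block equals $e^{A_{\rm u}t}$, because the invariant subspace is orthogonal. Hence
\[
C(\delta,K)\ge \sup_{t\ge0}\norm{e^{(A_{\rm u}+\delta)t}}.
\]
It then suffices to show $\sup_t\norm{e^{(A_{\rm u}+\delta)t}}\gtrsim\varepsilon^{-(q-1)}$ with $\varepsilon=\delta_*-\delta$. Take a complex Jordan chain of length $q$ for an eigenvalue $\lambda$ with $\operatorname{Re}\lambda=-\delta_*$. Acting on a suitable vector $v$ (the top of the chain), $e^{A_{\rm u}t}v$ has a component along the eigenvector of size $e^{\operatorname{Re}\lambda\, t}t^{q-1}/(q-1)!$. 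Since $\norm{\cdot}$ is equivalent to the Jordan-basis norm up to constants depending only on $A_{\rm u}$, we get
\[
\norm{e^{(A_{\rm u}+\delta)t}}\ge c\,t^{q-1}e^{-\varepsilon t}.
\]
Evaluating at $t=(q-1)/\varepsilon$ gives $c'\varepsilon^{-(q-1)}$. For $\delta=\delta_*$ and $q\ge2$ the same estimate gives $\sup_t=\infty$, which proves the forward direction of \eqref{eq:q-one-endpoint}.

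\emph{Upper bound.} Choose $K_2=0$ and $K_1$ such that $\sigma(A_{\rm c}+B_{\rm c}K_1)\subset\{\operatorname{Re}z\le-2\delta_*-1\}$; this is possible by pole placement. Fix this $K$ once and for all, independently of $\delta$. Write $M=A+BK+\delta I$, with diagonal blocks $M_1=A_{\rm c}+B_{\rm c}K_1+\delta$ and $M_2=A_{\rm u}+\delta$, and off-diagonal block $A_{12}$. Then
\[
e^{Mt}=\begin{pmatrix}e^{M_1t}&\int_0^t e^{M_1(t-s)}A_{12}e^{M_2s}\,ds\\0&e^{M_2t}\end{pmatrix}.
\]
For $\delta\le\delta_*$ we have $\norm{e^{M_1t}}\le C_1e^{-(\delta_*+1)t}$. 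The Jordan structure of $A_{\rm u}$ gives
\[
\norm{e^{M_2t}}\le C_2(1+t)^{q-1}e^{-\varepsilon t}.
\]
This uses that eigenvalues with real part strictly less than $-\delta_*$ contribute at most $C(1+t)^{n}e^{-(\delta_*+\eta-\delta)t}$, which is $\le C$ uniformly. Maximizing $(1+t)^{q-1}e^{-\varepsilon t}$ gives a bound $\lesssim\varepsilon^{-(q-1)}$ for $\varepsilon\le\varepsilon_0$. The convolution term is bounded by
\[
\int_0^t C_1e^{-(\delta_*+1)(t-s)}\,C_2(1+s)^{q-1}e^{-\varepsilon s}\,ds\lesssim\sup_s(1+s)^{q-1}e^{-\varepsilon s},
\]
because the first factor is integrable in $t-s$; this is again $\lesssim\varepsilon^{-(q-1)}$. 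With $\varepsilon=0$ and $q=1$, all bounds are uniform in $t$, so $\Chat(\delta_*)<\infty$. This proves the reverse direction of \eqref{eq:q-one-endpoint}.

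Formula \eqref{eq:recover-q} follows by taking logarithms in \eqref{eq:finite-endpoint-asymptotic} and dividing by $\log(1/\varepsilon)$.

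The main obstacle is the lower bound's passage from $\norm{e^{(A+BK)t}}$ to $A_{\rm u}$, which must hold for every $K$ simultaneously. This is resolved by the block-triangular structure combined with orthogonality of the decomposition: the compression to the orthogonal complement of the controllable subspace is feedback independent. A secondary point is keeping the constants uniform in $\delta$ near $\delta_*$. This requires separating the critical eigenvalues from the remaining ones of $A_{\rm u}$ via a spectral projection, with constants depending only on $A_{\rm u}$.
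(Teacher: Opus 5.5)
Your proposal is correct and takes essentially the same approach as the paper. The lower bound comes from the feedback-independent block $e^{A_{\rm u}t}$ of the orthogonal Kalman decomposition, evaluated along a critical Jordan chain at $t=(q-1)/\varepsilon$; this also gives $\Chat(\delta_*)=+\infty$ when $q\ge2$. The upper bound comes from a single $\delta$-independent feedback $(K_1,0)$, chosen by pole placement on $(A_{\rm c},B_{\rm c})$, together with the block-triangular variation-of-constants formula; this also gives $\Chat(\delta_*)<+\infty$ when $q=1$.

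The differences are bookkeeping only. You bound the $\delta$-shifted semigroup and the convolution term uniformly in $t$, whereas the paper first bounds the $\delta_*$-shifted semigroup by $C_M(1+t^{q-1})$ and handles the endpoint equivalence through a separate proposition. You should still state explicitly the degenerate case $n_{\rm c}=0$ and the restriction $\varepsilon_0<\delta_*$, which keeps $\delta>0$; both are immediate.
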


\begin{remark}\label{remark1-8-29}
$(i)$
To the best of our knowledge, Theorem~\ref{thm:finite-endpoint} is also new.
Although it complements Theorem~\ref{thm:main-asymptotic} by completing the
endpoint picture for stabilizable finite-dimensional systems, it has
independent structural significance. The critical Jordan-block size $q$
determines the endpoint asymptotic order of $\Chat(\delta)$, and conversely
\eqref{eq:recover-q} shows that this order recovers $q$. In addition,
\eqref{eq:q-one-endpoint} shows that $q=1$ exactly characterizes the case
in which the endpoint $\delta_*$ itself is achievable.

$(ii)$
The lower bound in Theorem~\ref{thm:finite-endpoint} is obtained by first
isolating the feedback-invariant uncontrollable block through the Kalman
decomposition. After shifting by $\delta_*I_{n_{\rm u}}$, a critical Jordan
chain of length $q$ produces polynomial semigroup growth of order
$t^{q-1}$. When
$q\ge2$, with $\varepsilon=\delta_*-\delta$, balancing this polynomial
growth against the exponential factor $e^{-\varepsilon t}$ at the time
scale $t\sim\varepsilon^{-1}$ yields the lower bound of order
$\varepsilon^{-(q-1)}$. For $q=1$, the corresponding lower bound is of
order one.

$(iii)$
Together, Theorems~\ref{thm:main-asymptotic} and
\ref{thm:finite-endpoint} give a complete description of the endpoint
behavior of the optimal stabilization prefactor for stabilizable
finite-dimensional pairs. Their lower bounds arise from different
mechanisms: a feedback-invariant relation along a longest controllability
chain in the controllable case, and polynomial growth generated by a
critical uncontrollable Jordan chain in the finite-endpoint case.
\end{remark}

\noindent{\bf Related work}
Pole-placement overshoot estimates provide a closely related line of work.
An earlier estimate was obtained by Sun, Wang, Xie, and Yu~\cite{SWXY2006}.
Most recently, Li, Zhou, and Cao~\cite{LiZhouCao2026} obtained bounds for a
parametric Lyapunov equation and, as an application, a pole-placement
overshoot estimate with exponent $\mu-1$. Their work does not study the
feedback-optimized quantity $\Chat(\delta)$. However, in the present
notation, their estimate gives, for a particular family of feedbacks
$K_\delta$,
$C(\delta,K_\delta)\le C\delta^{\mu-1}$, and therefore implies the
same-order upper bound for $\Chat(\delta)$. Theorem~\ref{thm:main-asymptotic}
further provides the matching lower bound for $\Chat(\delta)$, showing that
the order $\delta^{\mu-1}$ is optimal over all constant state feedbacks.
Our proof of the upper bound is independent of \cite{LiZhouCao2026} and
uses the scaling structure of the Brunovsk\'y form.

A related structural phenomenon appears in fast-control problems. In
Seidman's result~\cite{Seidman1988}, the relevant controllability-layer
index is $\mu-1$, while the worst-case minimum $L^2$-control norm has order
$T^{-(\mu-1/2)}$ as $T\downarrow0$. Seidman and
Yong~\cite{SeidmanYong1996} extended this analysis to $L^p$ controls.
Although these open-loop fast-control problems are different from the
feedback-optimized problem studied here, both are governed by the same
longest controllability chain.

Prescribed transient bounds for feedback stabilization form another closely
related line of work. Hinrichsen, Plischke, and Wirth~\cite{HPW2002} and
Plischke and Wirth~\cite{PW2004} studied, in the present notation, the
feasibility problem
\[
    \text{given }(M,\delta),\qquad
    \text{does there exist }K\text{ such that }C(\delta,K)\le M?
\]
Their work does not introduce or study the feedback-optimized quantity
\[
    \Chat(\delta)=\inf_{K\in\R^{m\times n}}C(\delta,K),
\]
nor its endpoint asymptotics. The latter is the problem addressed in the
present paper.

The paper is organized as follows. Section~\ref{sec:preliminaries} records
the auxiliary quantities and controllability properties used in the
analysis. Sections~\ref{sec:controllable} and \ref{sec:finite} prove
Theorems~\ref{thm:main-asymptotic} and \ref{thm:finite-endpoint},
respectively. Section~\ref{sec:examples} illustrates the structural content
of the two results, and Section~\ref{sec:appendix} contains auxiliary
proofs.

\section{Preliminaries}
\label{sec:preliminaries}

This section introduces the auxiliary quantities used in the analysis of
both endpoint problems and records the controllability properties needed
for the proof of Theorem~\ref{thm:main-asymptotic}.

\subsection{The optimal stabilization prefactor and the achievable decay-rate range}
\label{sec:setting}

Recall the quantity $C(\delta,K)$ from \eqref{eq:CdeltaK}. Since
$C(\delta,K)\ge1$ for every $\delta>0$ and every $K\in\R^{m\times n}$,
we have $\Chat(\delta)\ge1$. Moreover, if $0<\delta_1\le\delta_2$, then
$C(\delta_1,K)\le C(\delta_2,K)$ for every $K\in\R^{m\times n}$, and hence
$\Chat(\delta_1)\le\Chat(\delta_2)$. Thus $\Chat$ is nondecreasing on
$(0,+\infty)$.

For each $\delta>0$, define
\begin{equation}\label{eq:Fdelta}
    \F_\delta
    :=
    \{K\in\R^{m\times n}:C(\delta,K)<+\infty\}.
\end{equation}
Thus $C(\delta,K)$ is finite precisely for $K\in\F_\delta$, while
$C(\delta,K)=+\infty$ for $K\notin\F_\delta$. By
\eqref{eq:Chat-via-CdeltaK}, $\Chat(\delta)=\inf_{K\in\F_\delta}C(\delta,K)$ when
$\F_\delta\neq\varnothing$, whereas $\Chat(\delta)=+\infty$ if
$\F_\delta=\varnothing$. Define the right endpoint of the achievable
decay-rate range by
\begin{equation}\label{eq:deltamax}
\delta_{\max}
:=
\sup\{\delta>0:\F_\delta\neq\varnothing\}
=
\sup\{\delta>0:\Chat(\delta)<+\infty\}.
\end{equation}
In the stabilizable but uncontrollable case, Section~\ref{sec:finite} shows
that $\delta_{\max}=\delta_*$.

For the remainder of this section, assume that $(A,B)$ is controllable. We
record the properties of the controllability indices and the Brunovsk\'y
reduction that will be used in the proof of
Theorem~\ref{thm:main-asymptotic}.

\subsection{Controllability indices and Brunovsk\'y reduction}
\label{sec:controllability-brunovsky}
\label{sec:controllability-indices}
\label{sec:brunovsky}

For $j\ge1$, set
\begin{equation}\label{eq:Rj}
    \mathcal R_j
    :=
    \operatorname{Ran}[B,AB,\ldots,A^{j-1}B],
    \qquad
    \mathcal R_0:=\{0\},
\end{equation}
and write
\begin{equation}\label{8-8-28-w}
    d_j:=\dim\mathcal R_j,
    \qquad
    \rho_j:=d_j-d_{j-1},
    \qquad j\ge1.
\end{equation}
Let
\begin{equation}\label{9-8-28-w}
    r:=\rho_1=\rank B,
\end{equation}
and, for $i=1,\ldots,r$, define
\begin{equation}\label{eq:nu-definition}
    \nu_i
    :=
    \max\{j\ge1:\rho_j\ge i\}.
\end{equation}
The integers $\nu_1,\ldots,\nu_r$ are the positive
\emph{controllability indices} of $(A,B)$, ordered so that
$\nu_1\ge\cdots\ge\nu_r\ge1$.

We will use the following properties. The first is standard; see
\cite{Brunovsky1970,WonhamMorse1972}.

\begin{property}\label{lem:rho-monotone}\label{prop:controllability-indices}
The sequence $(\rho_j)_{j\ge1}$ is nonincreasing,
$\rho_1=\rank B$, and $\rho_j=0$ for all sufficiently large $j$. Moreover,
for every $j\ge1$,
\begin{equation}\label{eq:rho-nu}
    \rho_j
    =
    \#\{i\in\{1,\ldots,r\}:\nu_i\ge j\},
\end{equation}
and
\begin{equation}\label{eq:dj-nu}
    d_j
    =
    \sum_{i=1}^r\min\{j,\nu_i\}.
\end{equation}
In particular,
\begin{equation}\label{eq:sum-nu}
    \nu_1+\cdots+\nu_r=n,
\end{equation}
and the integer $\mu$ defined in \eqref{eq:mu-rank} satisfies
\begin{equation}\label{eq:mu-nu1}
    \mu=\nu_1.
\end{equation}
\end{property}

Associated with these controllability indices, consider the chain system
\begin{equation}\label{eq:Br-system}
\begin{cases}
    \dot z_{i,1}=v_i,\\
    \dot z_{i,j+1}=z_{i,j},
    \qquad j=1,\ldots,\nu_i-1,
\end{cases}
\qquad
i=1,\ldots,r,
\qquad t\ge0.
\end{equation}
For each $i$, the variables
$z_{i,1},\ldots,z_{i,\nu_i}$ and the scalar input $v_i$ form a
controllability chain of length $\nu_i$. By
\eqref{eq:sum-nu}, the state has dimension $n$, and by
\eqref{eq:mu-nu1}, a longest chain has length $\mu=\nu_1$.

For $k\ge1$, let
\[
J_k:=
\begin{pmatrix}
0&0&\cdots&0\\
1&0&\cdots&0\\
0&1&\ddots&\vdots\\
\vdots&\ddots&\ddots&0\\
0&\cdots&1&0
\end{pmatrix}
\in\R^{k\times k},
\qquad
b_k:=
\begin{pmatrix}
1\\
0\\
\vdots\\
0
\end{pmatrix}
\in\R^k.
\]
Then \eqref{eq:Br-system} has the matrix form
\begin{equation}\label{15-8-26}
    \dot z=A_{\rm Br}z+B_{\rm Br}v,
    \qquad t\ge0,
\end{equation}
where
\[
    A_{\rm Br}
    =
    \operatorname{diag}(J_{\nu_1},\ldots,J_{\nu_r}),
    \qquad
    B_{\rm Br}
    =
    \begin{pmatrix}
        b_{\nu_1} & 0 & \cdots & 0\\
        0 & b_{\nu_2} & \cdots & 0\\
        \vdots & \vdots & \ddots & \vdots\\
        0 & 0 & \cdots & b_{\nu_r}
    \end{pmatrix}.
\]
For this pair, define
\[
    C_{\rm Br}(\delta,\widetilde K)
    :=
    \sup_{t\ge0}e^{\delta t}
    \norm{e^{(A_{\rm Br}+B_{\rm Br}\widetilde K)t}},
    \qquad
    \F_\delta^{\rm Br}
    :=
    \{\widetilde K\in\R^{r\times n}:
    C_{\rm Br}(\delta,\widetilde K)<+\infty\},
\]
and
\begin{equation}\label{eq:Chat-Br}
    \Chat_{\rm Br}(\delta)
    :=
    \inf_{\widetilde K\in\R^{r\times n}}
    C_{\rm Br}(\delta,\widetilde K),
    \qquad \delta>0.
\end{equation}

\begin{property}\label{lem:Br-controllable}\label{lem:Br-chain-feedback}
The pair $(A_{\rm Br},B_{\rm Br})$ is controllable. Moreover, under any
state feedback $v=\widetilde Kz$, if $\widetilde K_i$ is the $i$-th row of
$\widetilde K$, then
\begin{equation}\label{17-8-28}
    \dot z=(A_{\rm Br}+B_{\rm Br}\widetilde K)z
\end{equation}
has the chain form
\begin{equation}\label{18-8-28}
\begin{cases}
    \dot z_{i,1}=\widetilde K_i z,\\
    \dot z_{i,j+1}=z_{i,j},
    \qquad j=1,\ldots,\nu_i-1,
\end{cases}
\qquad i=1,\ldots,r.
\end{equation}
In particular, the equations
$\dot z_{i,j+1}=z_{i,j}$ are unaffected by the feedback.
\end{property}

The controllability of $(A_{\rm Br},B_{\rm Br})$ is standard; see
\cite{Brunovsky1970}. The chain equations in
Property~\ref{lem:Br-chain-feedback} follow directly from
\eqref{eq:Br-system}.

\begin{property}\label{lem:remove-redundant-inputs}
Let $B_0\in\R^{n\times r}$ have full column rank and satisfy
$\operatorname{Ran}B_0=\operatorname{Ran}B$. Then
\begin{equation}\label{eq:closed-loop-family-B0}
    \{BK:K\in\R^{m\times n}\}
    =
    \{B_0L:L\in\R^{r\times n}\}.
\end{equation}
Replacing $B$ by $B_0$ leaves $\Chat(\delta)$ unchanged for every
$\delta>0$, and also leaves the spaces $\mathcal R_j$, the controllability
indices, and $\mu$ unchanged. In particular, $(A,B_0)$ is controllable.
\end{property}

A proof of Property~\ref{lem:remove-redundant-inputs} is given in
Appendix~\ref{app:properties}. We may therefore replace $B$ by a
full-column-rank matrix without changing the quantities relevant to
Theorem~\ref{thm:main-asymptotic}. We henceforth denote this matrix again by
$B$ and assume that
\begin{equation}\label{21-8-28}
B\in\R^{n\times r},
\qquad
\rank B=r.
\end{equation}

The following classical theorem provides the structural reduction used
below; see \cite{Brunovsky1970}.

\begin{theorem}[Brunovsk\'y canonical form]
\label{thm:Brunovsky-canonical-form}
Assume that $(A,B)$ is controllable and that $B$ satisfies
\eqref{21-8-28}. Let $A_{\rm Br}$ and $B_{\rm Br}$ be the matrices in
\eqref{15-8-26} associated with the controllability indices of $(A,B)$.
Then there exist invertible matrices
$T\in\R^{n\times n}$ and $U\in\R^{r\times r}$, and a matrix
$K_{\rm pre}\in\R^{r\times n}$, such that
\begin{equation}\label{eq:Br-feedback-equivalence}
T^{-1}(A+BK_{\rm pre})T=A_{\rm Br},
\qquad
T^{-1}BU=B_{\rm Br}.
\end{equation}
Equivalently, under the state and input-feedback transformation
$x=Tz$ and $u=K_{\rm pre}x+Uv$, the system
$\dot x=Ax+Bu$ becomes \eqref{15-8-26}.
\end{theorem}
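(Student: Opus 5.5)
The statement is the classical Brunovsk\'y theorem; I would prove it by building an adapted basis from Krylov chains and then absorbing the leftover terms into a feedback. Note what \eqref{eq:Br-feedback-equivalence} requires. Since $J_k$ maps $e_j\mapsto e_{j+1}$ and $e_k\mapsto0$, and $B_{\rm Br}$ has $i$-th column $e_{i,1}$, we need a basis $\{t_{i,j}:1\le i\le r,\ 1\le j\le\nu_i\}$ of $\R^n$ and a feedback $K_{\rm pre}$ such that
\[
t_{i,1}\in\Ran B,\qquad (A+BK_{\rm pre})t_{i,j}=t_{i,j+1}\ (j<\nu_i),\qquad (A+BK_{\rm pre})t_{i,\nu_i}=0 .
\]
Then $T=[t_{1,1},\dots,t_{r,\nu_r}]$, and $U$ is defined by $Bu_i=t_{i,1}$. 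Here $U$ is invertible because the $t_{i,1}$ are independent and $B$ has full column rank \eqref{21-8-28}.

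\emph{Step 1 (adapted input vectors).} Using Property~\ref{prop:controllability-indices}, I would choose $w_1,\dots,w_r\in\Ran B$, with $w_i=Bu_i$, by a layer-by-layer selection. Process $j=\mu,\mu-1,\dots,1$ and pick new vectors so that
\[
\{A^{k-1}w_i:\nu_i\ge k,\ k\le j\}
\]
is a basis of $\mathcal R_j$ for every $j$. Equivalently, at each step one extends a basis of $\mathcal R_{j-1}+A(\mathcal R_{j-1})$-type quotients; the counts match because $\rho_j=\#\{i:\nu_i\ge j\}$ and $d_j=\sum_i\min\{j,\nu_i\}$. The key consequence is a degree constraint. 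Since $A^{\nu_i}w_i\in\mathcal R_{\nu_i+1}$ while $\rho_{\nu_i+1}$ does not count index $i$, one can arrange
\begin{equation*}
A^{\nu_i}w_i=\sum_{l}\sum_{k=1}^{\min\{\nu_i,\nu_l\}}c^{(i)}_{l,k}A^{k-1}w_l .
\end{equation*}
In words, $A^{\nu_i}w_i$ lies in the span of chain vectors of depth at most $\nu_i$. I expect this to be the main obstacle: the selection must be done so that the linear dependence appears with depth $\le\nu_i$ in every chain, not merely modulo $\mathcal R_{\nu_i}$. I would handle it by the standard Luenberger ordering argument, choosing the longest chains first. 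The fact that $\{\rho_j\}$ is nonincreasing guarantees that once $A^{j-1}w_i$ becomes dependent, so do all $A^{j'-1}w_i$ with $j'>j$.

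\emph{Step 2 (shifted chains).} Define, for $1\le j\le\nu_i$,
\[
t_{i,j}:=A^{j-1}w_i-\sum_l\sum_{k}c^{(i)}_{l,k}A^{k-2-(\nu_i-j)}w_l ,
\]
where only terms with nonnegative exponent are kept. Since $k\le\nu_i$, the exponent $k-1-\nu_i$ is negative for $j=1$, so $t_{i,1}=w_i\in\Ran B$. Directly from the expansion in Step 1,
\[
At_{i,j}-t_{i,j+1}\in\Ran B\quad(j<\nu_i),\qquad At_{i,\nu_i}\in\Ran B ,
\]
because each shift by $A$ moves every correction term one level deeper. The only mismatch comes from terms with exponent $0$, that is, multiples of $w_l\in\Ran B$. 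Linear independence of the $t_{i,j}$ follows by triangularity: $t_{i,j}$ equals $A^{j-1}w_i$ plus vectors of strictly lower depth, and the $A^{j-1}w_i$ form a basis by Step 1.

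\emph{Step 3 (feedback).} Write $At_{i,j}-t_{i,j+1}=Bv_{i,j}$, with $t_{i,\nu_i+1}:=0$. Define $K_{\rm pre}$ on the basis by $K_{\rm pre}t_{i,j}:=-v_{i,j}$; this is well defined since $B$ is injective. Then $(A+BK_{\rm pre})t_{i,j}=t_{i,j+1}$, which is exactly $T^{-1}(A+BK_{\rm pre})T=A_{\rm Br}$. Together with $T^{-1}BU=B_{\rm Br}$, this gives \eqref{eq:Br-feedback-equivalence}. The equivalent system form follows by substituting $x=Tz$ and $u=K_{\rm pre}x+Uv$.
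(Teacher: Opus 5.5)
The paper does not prove this theorem; it quotes it from Brunovsk\'y. Your Steps 2 and 3 are correct and standard. Once $A^{\nu_i}w_i\in\mathcal R_{\nu_i}$ is available, the corrected vectors satisfy $t_{i,1}=w_i$, $At_{i,j}-t_{i,j+1}\in\Ran B$ and $At_{i,\nu_i}\in\Ran B$, and they are independent by depth triangularity. Then $K_{\rm pre}$ and $U$ are read off.

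\textbf{The gap is in Step 1.} That step carries the real content, and you correctly flag it as the main obstacle, but you only assert it. The selection you actually describe (longest chains first, new vectors chosen only so that the layered families are bases of the $\mathcal R_j$) gives merely $A^{\nu_i}w_i\in\mathcal R_{\nu_i}+\operatorname{span}\{A^{\nu_i}w_l:\nu_l>\nu_i\}$. The top-layer terms need not vanish.

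For instance, take $n=3$, $Ae_1=e_2$, $Ae_2=Ae_3=0$, $B=[e_1,e_3]$, so $\nu=(2,1)$. The choice $w_1=e_1$, $w_2=e_1+e_3$ satisfies your basis property for $j=1,2$. However, $Aw_2=e_2=Aw_1\notin\mathcal R_1$. Then $t_{2,1}=w_2$ has $At_{2,1}=e_2\notin\Ran B$, and no feedback can make $(A+BK_{\rm pre})t_{2,1}=0$.

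Neither of your justifications excludes this. The observation that $\rho_{\nu_i+1}$ does not count index $i$ does not, and neither does the monotonicity of $\rho_j$, which only concerns chain termination. A plain Luenberger scan of the columns of $B$ also leaves exactly such top-layer terms $A^{\nu_i}b_l$, coming from longer chains $l<i$, in the dependence relation.

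\textbf{The missing idea is to choose the input vectors along a flag in $\Ran B$.} Let $\mathcal W_j:=\{w\in\Ran B: A^{j-1}w\in\mathcal R_{j-1}\}$.
\begin{itemize}
\item $\mathcal W_j$ is the kernel of the surjection $\Ran B\to\mathcal R_j/\mathcal R_{j-1}$, $w\mapsto A^{j-1}w+\mathcal R_{j-1}$, so $\dim\mathcal W_j=r-\rho_j$.
\item $\mathcal W_j\subseteq\mathcal W_{j+1}$ because $A\mathcal R_{j-1}\subseteq\mathcal R_j$, with $\mathcal W_1=\{0\}$ and $\mathcal W_{\mu+1}=\Ran B$.
\end{itemize}
Choose $w_1,\dots,w_r$ so that $\{w_i:\nu_i<j\}$ is a basis of $\mathcal W_j$ for every $j$; the counts match by \eqref{eq:rho-nu}. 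In other words, build the shortest chains first. Equivalently, in your longest-first scheme, draw the new vectors at level $j$ from $\mathcal W_{j+1}$.

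With this choice, $w_i\in\mathcal W_{\nu_i+1}$ is exactly your degree constraint. The basis property also follows: $\{w_i:\nu_i\ge j\}$ spans a complement of $\mathcal W_j$ in $\Ran B$, so the classes of the $A^{j-1}w_i$ form a basis of $\mathcal R_j/\mathcal R_{j-1}$. Starting from a Luenberger scan instead, you reach the same point by replacing each $b_i$ with $b_i$ minus the appropriate multiples of the heads $b_l$ of the longer chains. With Step 1 repaired this way, your Steps 2 and 3 complete the proof.
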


Once a Brunovsk\'y transformation is fixed, it induces a one-to-one
correspondence between the feedback gains of the two systems.

\begin{property}\label{prop:feedback-decay-correspondence}
Let $T$, $U$, and $K_{\rm pre}$ be as in
Theorem~\ref{thm:Brunovsky-canonical-form}. The map
\begin{equation}\label{eq:feedback-correspondence}
    G:\R^{r\times n}\to\R^{r\times n},
    \qquad
    G(K):=U^{-1}(K-K_{\rm pre})T,
\end{equation}
is a bijection, with inverse
\begin{equation}\label{eq:feedback-correspondence-inverse}
    G^{-1}(\widetilde K)
    =
    K_{\rm pre}+U\widetilde K T^{-1}.
\end{equation}
Moreover,
\begin{equation}\label{eq:closed-loop-similarity}
    A_{\rm Br}+B_{\rm Br}G(K)
    =
    T^{-1}(A+BK)T,
    \qquad K\in\R^{r\times n}.
\end{equation}
\end{property}

\begin{property}\label{prop:Br-reduction}
There exists a constant $\kappa\ge1$, independent of $\delta$, such that
\begin{equation}\label{eq:Chat-comparison}
    \frac1{\kappa}\Chat_{\rm Br}(\delta)
    \le
    \Chat(\delta)
    \le
    \kappa\Chat_{\rm Br}(\delta),
    \qquad \delta>0.
\end{equation}
\end{property}

Proofs of Properties~\ref{prop:feedback-decay-correspondence} and
\ref{prop:Br-reduction} are given in Appendix~\ref{app:properties}.

\begin{remark}\label{rem:Br-reduction-meaning}
The state transformation $T$ need not be orthogonal, so the Euclidean
operator norm is not preserved and $\Chat(\delta)$ and
$\Chat_{\rm Br}(\delta)$ need not be equal. However,
\eqref{eq:Chat-comparison} shows that they are comparable with constants
independent of $\delta$. Consequently, they have the same endpoint growth
exponent.
\end{remark}

\section{Proof of Theorem~\ref{thm:main-asymptotic}}
\label{sec:controllable}

We first establish the lower and upper bounds for a controllable pair
$(A,B)$ satisfying \eqref{21-8-28}. These bounds will then be applied
to the full-column-rank pair obtained from the original pair in the proof
of Theorem~\ref{thm:main-asymptotic}.

The following proposition will be used in the proof of
Theorem~\ref{thm:main-asymptotic}.

\begin{proposition}[Lower bound in Brunovsk\'y form]
\label{prop:canonical-lower}
Assume that $(A,B)$ is controllable and that $B$ satisfies
\eqref{21-8-28}. Then 
\begin{equation}\label{eq:canonical-lower}
\Chat_{\rm Br}(\delta)
\ge
\delta^{\mu-1},  \qquad \delta>0.
\end{equation}
\end{proposition}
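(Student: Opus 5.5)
We prove the lower bound for each fixed $\widetilde K\in\F^{\rm Br}_\delta$, namely $C_{\rm Br}(\delta,\widetilde K)\ge\delta^{\mu-1}$; taking the infimum over $\widetilde K$ then gives \eqref{eq:canonical-lower}, since $C_{\rm Br}=+\infty$ off $\F^{\rm Br}_\delta$. If $\mu=1$ there is nothing to prove, because $C_{\rm Br}\ge1$. So assume $\mu\ge2$ and fix $\widetilde K\in\F^{\rm Br}_\delta$. Set $M:=A_{\rm Br}+B_{\rm Br}\widetilde K$, and write $C:=C_{\rm Br}(\delta,\widetilde K)<\infty$.

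Since $\norm{e^{Mt}}\le Ce^{-\delta t}$, every eigenvalue of $M$ has real part $\le-\delta<0$, so $M$ is invertible. We then have the Laplace representation
\[
M^{-k}=\frac{(-1)^k}{(k-1)!}\int_0^\infty t^{k-1}e^{Mt}\,dt,\qquad k\ge1.
\]
Using $\int_0^\infty t^{k-1}e^{-\delta t}dt=(k-1)!\,\delta^{-k}$, this yields
\[
\norm{M^{-k}}\le C\,\delta^{-k}.
\]

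The key step is a feedback-independent identity along the longest chain $i=1$, which has length $\nu_1=\mu$ by \eqref{eq:mu-nu1}. By Property~\ref{lem:Br-chain-feedback}, the rows of $M$ indexed by $(1,j+1)$, $j=1,\ldots,\mu-1$, equal $e_{1,j}^{\top}$ regardless of $\widetilde K$; that is,
\[
e_{1,j+1}^{\top}M=e_{1,j}^{\top}.
\]
Multiplying on the right by $M^{-1}$ gives $e_{1,j}^{\top}M^{-1}=e_{1,j+1}^{\top}$. Iterating from $j=1$ up to $j=\mu-1$ yields
\[
e_{1,1}^{\top}M^{-(\mu-1)}=e_{1,\mu}^{\top}.
\]
Hence $\norm{M^{-(\mu-1)}}\ge\norm{e_{1,1}^{\top}M^{-(\mu-1)}}=1$. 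This identity is the only step needing care: one must use the left-multiplication (row) form, so that the inverse propagates along the chain, and not the column form.

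Combining the identity with the Laplace bound for $k=\mu-1$ gives
\[
1\le C\,\delta^{-(\mu-1)},
\]
so $C_{\rm Br}(\delta,\widetilde K)\ge\delta^{\mu-1}$. Taking the infimum over $\widetilde K$ completes the proof.
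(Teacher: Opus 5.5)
Your proposal is correct and follows the paper's proof essentially step for step: you handle $\mu=1$ via $C_{\rm Br}\ge1$, derive the feedback-independent row identity $e_{1,1}^{\top}M^{-(\mu-1)}=e_{1,\mu}^{\top}$ to get $\norm{M^{-(\mu-1)}}\ge1$, and bound $\norm{M^{-(\mu-1)}}\le C_{\rm Br}(\delta,\widetilde K)\,\delta^{-(\mu-1)}$ via the Laplace representation. Your justification of invertibility from the spectral bound $\operatorname{Re}\lambda\le-\delta$ is a slightly more explicit version of the paper's remark that $M$ is Hurwitz, but the argument is otherwise the same.
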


\begin{proof}
We fix an arbitrary $\delta>0$. It suffices to prove that, for every
$\widetilde K\in\R^{r\times n}$,
\begin{equation}\label{32-8-28-wg}
C_{\rm Br}(\delta,\widetilde K)
\ge
\delta^{\mu-1},
\qquad \delta>0,\quad \widetilde K\in\R^{r\times n}.
\end{equation}

We first consider the case $\mu=1$. By evaluating the expression defining
$C_{\rm Br}(\delta,\widetilde K)$ at $t=0$, we obtain
\eqref{32-8-28-wg}.

We next consider the case $\mu\ge2$. If
$\widetilde K\notin\F_\delta^{\rm Br}$, then
$C_{\rm Br}(\delta,\widetilde K)=+\infty$, and hence
\eqref{32-8-28-wg} holds.

Now let $\widetilde K\in\F_\delta^{\rm Br}$ and set
$M:=A_{\rm Br}+B_{\rm Br}\widetilde K$. Then $M$ is Hurwitz and hence
invertible. By the definition of
$C_{\rm Br}(\delta,\widetilde K)$,
\begin{equation}\label{eq:M-decay-lower}
\norm{e^{Mt}}
\le
C_{\rm Br}(\delta,\widetilde K)e^{-\delta t},
\qquad t\ge0,\quad \delta>0,\quad \widetilde K\in\F_\delta^{\rm Br}.
\end{equation}

Let $e_1,\ldots,e_n$ be the standard basis of $\R^n$. Since $(A,B)$ is
controllable, Property~\ref{prop:controllability-indices} applies and
\eqref{eq:mu-nu1} shows that the first chain has length $\mu$. Moreover,
Property~\ref{lem:Br-chain-feedback} applies. Since the first chain occupies
the first $\mu$ coordinates, that lemma gives
$e_{j+1}^\top M=e_j^\top$ for $j=1,\ldots,\mu-1$. Since $M$ is
invertible, $e_j^\top M^{-1}=e_{j+1}^\top$, and iteration yields
$e_1^\top M^{-(\mu-1)}=e_\mu^\top$. Consequently,
\begin{equation}\label{33-8-28}
1\le\norm{M^{-(\mu-1)}}.
\end{equation}

On the other hand, by the Laplace-transform formula for negative powers of
a Hurwitz matrix, we have
\begin{equation}\label{eq:negative-power-laplace}
    M^{-(\mu-1)}
    =
    \frac{(-1)^{\mu-1}}{(\mu-2)!}
    \int_0^\infty t^{\mu-2}e^{Mt}\,dt.
\end{equation}
By \eqref{eq:negative-power-laplace}, \eqref{eq:M-decay-lower}, and
$\int_0^\infty t^{\mu-2}e^{-\delta t}\,dt
=(\mu-2)!/\delta^{\mu-1}$ for $\delta>0$ and $\mu\ge2$, we obtain
\begin{equation}\label{34-8-28}
\norm{M^{-(\mu-1)}}
\le
\frac{C_{\rm Br}(\delta,\widetilde K)}
{\delta^{\mu-1}},
\qquad \delta>0,\quad \widetilde K\in\F_\delta^{\rm Br}.
\end{equation}
By \eqref{33-8-28} and \eqref{34-8-28}, we obtain
\eqref{32-8-28-wg} when $\mu\ge2$ and
$\widetilde K\in\F_\delta^{\rm Br}$.

Thus \eqref{32-8-28-wg} holds for every
$\widetilde K\in\R^{r\times n}$. Taking the infimum over
$\widetilde K\in\R^{r\times n}$ proves \eqref{eq:canonical-lower}.
\end{proof}

The following proposition will be used in the proof of
Theorem~\ref{thm:main-asymptotic}.

\begin{proposition}[Upper bound in Brunovsk\'y form]
\label{prop:canonical-upper}
Assume that $(A,B)$ is controllable and that $B$ satisfies
\eqref{21-8-28}. Then there exist constants $C_{\rm up}>0$ and $\delta_{\rm up}>0$ such that
\begin{equation}\label{eq:canonical-upper}
    \Chat_{\rm Br}(\delta)
    \le
    C_{\rm up}\delta^{\mu-1},
    \qquad
    \delta\ge\delta_{\rm up}.
\end{equation}
\end{proposition}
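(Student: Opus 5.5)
The plan is to exploit the scaling structure of the Brunovsk\'y form. Since $A_{\rm Br}$ and $B_{\rm Br}$ are block diagonal and each row of $\widetilde K$ may be chosen to act only on its own chain, I will take $\widetilde K$ block diagonal. The closed-loop matrix is then $M=\operatorname{diag}(M_1,\ldots,M_r)$, where $M_i=J_{\nu_i}+b_{\nu_i}k_i^\top$ is a companion-type matrix on the $i$-th chain. Because $\norm{e^{Mt}}=\max_i\norm{e^{M_it}}$, it suffices to find, for a single chain of length $k\le\mu$, a gain $k^\top(\delta)$ such that
\[
\sup_{t\ge0}e^{\delta t}\norm{e^{(J_k+b_kk(\delta)^\top)t}}\le C_k\delta^{k-1}
\]
for $\delta\ge1$. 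Since $\delta^{k-1}\le\delta^{\mu-1}$ for $\delta\ge1$, taking $C_{\rm up}=\max_kC_k$ and $\delta_{\rm up}=1$ then gives \eqref{eq:canonical-upper}.

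For a single chain, fix once and for all a gain $k_0$ such that $N:=J_k+b_kk_0^\top$ is Hurwitz with spectral abscissa $\le-2$. Such a gain exists by Property~\ref{lem:Br-controllable} and pole placement. Hence there is a constant $c_k$ with $\norm{e^{Nt}}\le c_ke^{-2t}$. Now introduce the diagonal scaling $D_s:=\operatorname{diag}(1,s^{-1},\ldots,s^{-(k-1)})$ with $s>0$. A direct check on the chain structure gives $D_s^{-1}J_kD_s=sJ_k$, since the subdiagonal entry $(j+1,j)$ gets multiplied by $s^{j}s^{-(j-1)}$, and $D_s^{-1}b_k=b_k$. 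Consequently
\[
sN=D_s^{-1}\bigl(J_k+b_k\,(s\,k_0^\top D_s^{-1})\bigr)D_s .
\]
That is, with the gain $k(s)^\top:=s\,k_0^\top D_s^{-1}$, whose entries are $s^j(k_0)_j$ (the usual high-gain polynomial scaling), the closed loop is $D_s(sN)D_s^{-1}$. Taking $s=\delta$, we get
\[
e^{\delta t}\norm{e^{(J_k+b_kk(\delta)^\top)t}}\le e^{\delta t}\norm{D_\delta}\norm{D_\delta^{-1}}\,c_ke^{-2\delta t}\le c_k\,\kappa(D_\delta).
\]

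The remaining step is the condition number estimate. For $\delta\ge1$, $\norm{D_\delta}=1$ and $\norm{D_\delta^{-1}}=\delta^{k-1}$, so $\kappa(D_\delta)=\delta^{k-1}$. Hence $C_k=c_k$ works, and this completes the bound. The only point I expect to require care is verifying the similarity identity $D_s^{-1}(J_k+b_k k(s)^\top)D_s=s(J_k+b_kk_0^\top)$ with the correct orientation of the scaling. I will check it directly on the chain equations of Property~\ref{lem:Br-chain-feedback} by substituting $z_j=s^{-(j-1)}w_j$ and rescaling time by $s$. If that orientation turns out reversed, replacing $D_s$ by $D_s^{-1}$ only swaps which of $\norm{D}$, $\norm{D^{-1}}$ equals $\delta^{k-1}$, so the product $\delta^{k-1}$ is unaffected. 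I do not anticipate a serious obstacle, since the lower-order factor in the bound comes only from the conditioning of the scaling matrix.
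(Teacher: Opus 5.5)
Your proposal is correct and follows essentially the paper's argument: a fixed Hurwitz closed loop is rescaled by a diagonal similarity of the Brunovsk\'y chains, and the factor $\delta^{\mu-1}$ comes only from $\norm{D_s}\norm{D_s^{-1}}$. The paper instead uses $D_{i,s}=\operatorname{diag}(s^{\nu_i-1},\ldots,1)$ with an input scaling $R_s$, a possibly coupled $\widetilde K_0$, and $s=\delta/\eta$, whereas you decouple the chains and normalize the first entry; these differences are cosmetic.

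One small fix is needed. Spectral abscissa $\le-2$ does not by itself give $\norm{e^{Nt}}\le c_ke^{-2t}$: the companion matrix $N$ is nonderogatory, so a repeated pole at exactly $-2$ produces a nontrivial Jordan block. Either place the poles strictly to the left of $-2$, or use $\norm{e^{Nt}}\le c_ke^{-t}$, which is all your final estimate requires.
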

\begin{proof}
Since $(A,B)$ is controllable, Property~\ref{lem:Br-controllable} applies.
Hence we may choose $\widetilde K_0\in\R^{r\times n}$ such that
$H:=A_{\rm Br}+B_{\rm Br}\widetilde K_0$ is Hurwitz. Then there exist
constants $C_H\ge1$ and $\eta>0$ such that
\begin{equation}\label{eq:H-decay}
\norm{e^{Ht}}
\le
C_He^{-\eta t},
\qquad t\ge0.
\end{equation}

For $s\ge1$ and $i=1,\ldots,r$, define
\[
    D_{i,s}
    :=
    \operatorname{diag}
    (s^{\nu_i-1},s^{\nu_i-2},\ldots,s,1),
    \qquad s\ge1,\quad i=1,\ldots,r.
\]
with $D_{i,s}=(1)$ when $\nu_i=1$, and set
$D_s:=\operatorname{diag}(D_{1,s},\ldots,D_{r,s})\in\R^{n\times n}$.
Since $(A,B)$ is controllable,
Property~\ref{prop:controllability-indices} applies and gives
$\mu=\max_{1\le i\le r}\nu_i$. Since $D_s$ is diagonal and $s\ge1$,
the Euclidean operator norm gives
\begin{equation}\label{eq:D-condition}
    \norm{D_s}=s^{\mu-1},
    \qquad
    \norm{D_s^{-1}}=1,
    \qquad s\ge1.
\end{equation}
Set $R_s:=\operatorname{diag}(s^{\nu_1},\ldots,s^{\nu_r})\in\R^{r\times r}$.
By the definitions of $J_{\nu_i}$, $b_{\nu_i}$, and $D_{i,s}$, a direct
computation shows
\begin{equation}\label{eq:Br-scaling-identities}
    D_s^{-1}A_{\rm Br}D_s=sA_{\rm Br},
    \qquad
    D_s^{-1}B_{\rm Br}R_s=sB_{\rm Br},
    \qquad s\ge1.
\end{equation}
Now define $\widetilde K_s:=R_s\widetilde K_0D_s^{-1}$. By
\eqref{eq:Br-scaling-identities}, we obtain
\begin{equation}\label{eq:high-gain-similarity}
    D_s^{-1}
    (A_{\rm Br}+B_{\rm Br}\widetilde K_s)
    D_s
    =
    sH,
    \qquad s\ge1.
\end{equation}
By \eqref{eq:high-gain-similarity},
$$
    e^{(A_{\rm Br}+B_{\rm Br}\widetilde K_s)t}
    =
    D_se^{sHt}D_s^{-1},
    \qquad t\ge0,\quad s\ge1.
$$
Together with \eqref{eq:H-decay} and \eqref{eq:D-condition}, this yields
\begin{equation}\label{eq:scaled-decay}
\norm{e^{(A_{\rm Br}+B_{\rm Br}\widetilde K_s)t}}
\le
C_Hs^{\mu-1}e^{-\eta st},
\qquad t\ge0,\quad s\ge1.
\end{equation}

For $\delta\ge\eta$, choose $s=\delta/\eta\ge1$. Then
\eqref{eq:scaled-decay} gives

$$
    \norm{e^{(A_{\rm Br}+B_{\rm Br}\widetilde K_s)t}}
    \le
    C_H\eta^{-(\mu-1)}\delta^{\mu-1}e^{-\delta t},
    \qquad t\ge0,\quad \delta\ge\eta.
$$

By the definition of $C_{\rm Br}(\delta,\widetilde K_s)$, we obtain
$$
    C_{\rm Br}(\delta,\widetilde K_s)
    \le
    C_H\eta^{-(\mu-1)}\delta^{\mu-1},
    \qquad \delta\ge\eta.
$$
Hence, by the definition of $\Chat_{\rm Br}(\delta)$,
$$
    \Chat_{\rm Br}(\delta)
    \le
    C_H\eta^{-(\mu-1)}\delta^{\mu-1},
    \qquad
    \delta\ge\eta.
$$
Thus \eqref{eq:canonical-upper} holds with
$C_{\rm up}=C_H\eta^{-(\mu-1)}$ and $\delta_{\rm up}=\eta$.
\end{proof}

\vskip 5pt
We now return to the original controllable pair and prove
Theorem~\ref{thm:main-asymptotic}.

\begin{proof}[Proof of Theorem~\ref{thm:main-asymptotic}]
Let $r=\rank B$, and choose $B_0\in\R^{n\times r}$ with full column rank
such that $\operatorname{Ran}B_0=\operatorname{Ran}B$. Since $(A,B)$ is
controllable, Property~\ref{lem:remove-redundant-inputs} applies. Hence
$(A,B_0)$ is controllable, and replacing $B$ by $B_0$ leaves
$\Chat(\delta)$, the controllability indices, and $\mu$ unchanged.

First, since $(A,B)$ and $(A,B_0)$ have the same controllability indices, the
Brunovsk\'y canonical pair associated with $(A,B_0)$ is exactly the pair
$(A_{\rm Br},B_{\rm Br})$ defined in \eqref{15-8-26}.
Second, since $(A,B_0)$ is
controllable and $B_0$ has full column rank,
Property~\ref{prop:Br-reduction} applies to $(A,B_0)$.
Third, by
Property~\ref{lem:remove-redundant-inputs}, the optimal stabilization
prefactor of $(A,B_0)$
is the same as that of $(A,B)$. Combining these three facts with
\eqref{eq:Chat-comparison} yields
\begin{equation}\label{eq:main-Br-comparison}
    \frac1{\kappa}\Chat_{\rm Br}(\delta)
    \le
    \Chat(\delta)
    \le
    \kappa\Chat_{\rm Br}(\delta),
    \qquad \delta>0,
\end{equation}
where $\kappa\ge1$ is independent of $\delta$.

We first prove the lower bound. Since $(A,B_0)$ is controllable and $B_0$
has full column rank, Proposition~\ref{prop:canonical-lower} applies to
$(A,B_0)$. By Property~\ref{lem:remove-redundant-inputs}, the integer $\mu$
for $(A,B_0)$ is the same as that for $(A,B)$, and by the first fact above,
its Brunovsk\'y canonical pair is $(A_{\rm Br},B_{\rm Br})$. Hence
\eqref{eq:canonical-lower} gives
\begin{equation}\label{eq:main-Br-lower}
    \Chat_{\rm Br}(\delta)
    \ge
    \delta^{\mu-1},
    \qquad \delta>0.
\end{equation}
By \eqref{eq:main-Br-comparison} and \eqref{eq:main-Br-lower}, we obtain
\begin{equation}\label{eq:main-lower-final}
    \Chat(\delta)
    \ge
    \frac1{\kappa}\delta^{\mu-1},
    \qquad \delta>0.
\end{equation}

We next prove the upper bound. Since $(A,B_0)$ is controllable and $B_0$
has full column rank, Proposition~\ref{prop:canonical-upper} applies to
$(A,B_0)$. By Property~\ref{lem:remove-redundant-inputs}, the integer $\mu$
for $(A,B_0)$ is the same as that for $(A,B)$, and by the first fact above,
its Brunovsk\'y canonical pair is $(A_{\rm Br},B_{\rm Br})$. Hence
\eqref{eq:canonical-upper} gives constants $C_{\rm up}>0$ and
$\delta_{\rm up}>0$ such that
\begin{equation}\label{eq:main-Br-upper}
    \Chat_{\rm Br}(\delta)
    \le
    C_{\rm up}\delta^{\mu-1},
    \qquad
    \delta\ge\delta_{\rm up}.
\end{equation}
By \eqref{eq:main-Br-comparison} and \eqref{eq:main-Br-upper}, we obtain
\begin{equation}\label{eq:main-upper-final}
    \Chat(\delta)
    \le
    \kappa C_{\rm up}\delta^{\mu-1},
    \qquad
    \delta\ge\delta_{\rm up}.
\end{equation}

By \eqref{eq:main-lower-final} and \eqref{eq:main-upper-final},
\eqref{eq:main-asymptotic} holds with
$c_0=\kappa^{-1}$, $C_0=\kappa C_{\rm up}$, and
$\delta_0=\delta_{\rm up}$.

Taking logarithms in \eqref{eq:main-asymptotic}, dividing by
$\log\delta$, and letting $\delta\to+\infty$, we obtain
\eqref{eq:log-slope}.

Finally, suppose that $\mu=1$. Then $\rank B=n$. Choose
$R\in\R^{m\times n}$ such that $BR=I_n$ and set
$K_\delta:=-R(A+\delta I_n)$. Then
$A+BK_\delta=-\delta I_n$, so $C(\delta,K_\delta)=1$. Since evaluating at
$t=0$ gives $C(\delta,K)\ge1$ for every $K$, we obtain
\eqref{eq:mu-one-exact}.

\end{proof}

\section{Proof of Theorem~\ref{thm:finite-endpoint}}
\label{sec:finite}

Assume throughout this section that $(A,B)$ is stabilizable but not
controllable. Let
\begin{equation}\label{eq:reachable-space}
    \mathcal R
    :=
    \operatorname{span}\{\Ran B,\Ran AB,\ldots,\Ran A^{n-1}B\}
\end{equation}
be the controllable subspace, and set
$n_{\rm c}:=\dim\mathcal R$ and $n_{\rm u}:=n-n_{\rm c}\ge1$.
Choose an orthogonal matrix $Q=(Q_{\rm c},Q_{\rm u})$, where
$Q_{\rm c}\in\R^{n\times n_{\rm c}}$ and
$Q_{\rm u}\in\R^{n\times n_{\rm u}}$, whose columns form orthonormal
bases of $\mathcal R$ and $\mathcal R^\perp$, respectively. Since
$\mathcal R$ is $A$-invariant and $\Ran B\subset\mathcal R$,
\begin{equation}\label{eq:orthogonal-Kalman}
    Q^\top A Q
    =
    \begin{pmatrix}
        A_{\rm c}&A_{\rm cu}\\
        0&A_{\rm u}
    \end{pmatrix},
    \qquad
    Q^\top B
    =
    \begin{pmatrix}
        B_{\rm c}\\
        0
    \end{pmatrix},
\end{equation}
where
\[
    A_{\rm c}\in\R^{n_{\rm c}\times n_{\rm c}},
    \qquad
    A_{\rm cu}\in\R^{n_{\rm c}\times n_{\rm u}},
    \qquad
    A_{\rm u}\in\R^{n_{\rm u}\times n_{\rm u}},
    \qquad
    B_{\rm c}\in\R^{n_{\rm c}\times m},
\]
and $(A_{\rm c},B_{\rm c})$ is controllable.

For $K\in\R^{m\times n}$, write
\[
    KQ=(K_{\rm c},K_{\rm u}),
    \qquad
    K\in\R^{m\times n},
    \qquad
    K_{\rm c}\in\R^{m\times n_{\rm c}},
    \qquad
    K_{\rm u}\in\R^{m\times n_{\rm u}}.
\]
Then, by \eqref{eq:orthogonal-Kalman},
\begin{equation}\label{eq:closed-loop-block}
    Q^\top(A+BK)Q
    =
    \begin{pmatrix}
        A_{\rm c}+B_{\rm c}K_{\rm c}
        &A_{\rm cu}+B_{\rm c}K_{\rm u}\\
        0&A_{\rm u}
    \end{pmatrix},
    \qquad K\in\R^{m\times n}.
\end{equation}
In particular, the uncontrollable block $A_{\rm u}$ is unaffected by
the feedback $K$.

The following standard proposition will be used below to identify
$\delta_*$ and $q$, and in the proofs of
Proposition~\ref{prop:maximal-rate} and
Theorem~\ref{thm:finite-endpoint}. For its proof, see
\cite{Kalman1963,Hautus1970}.

\begin{proposition}\label{prop:uncontrollable-block}
Assume that $(A,B)$ is not controllable. The following statements hold.
\begin{enumerate}[label=\textup{(\roman*)}]
    \item The pair $(A,B)$ is stabilizable if and only if $A_{\rm u}$ is
    Hurwitz.
    \item If another orthogonal Kalman decomposition is used, then its
    uncontrollable block is orthogonally similar to $A_{\rm u}$. In
    particular, the spectrum and Jordan block sizes of the uncontrollable
    block do not depend on the choice of orthogonal Kalman decomposition.
\end{enumerate}
\end{proposition}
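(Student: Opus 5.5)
The statement is Proposition~\ref{prop:uncontrollable-block}, the standard Kalman/Hautus facts. I would prove the two parts separately, starting from the block form \eqref{eq:orthogonal-Kalman} and the closed-loop block form \eqref{eq:closed-loop-block}.

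For (i), necessity comes directly from \eqref{eq:closed-loop-block}. For every $K$, the matrix $Q^\top(A+BK)Q$ is block upper triangular. Hence
\[
\sigma(A+BK)=\sigma(A_{\rm c}+B_{\rm c}K_{\rm c})\cup\sigma(A_{\rm u}).
\]
If some $K$ makes $A+BK$ Hurwitz, then $A_{\rm u}$ must be Hurwitz.

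For sufficiency, assume $A_{\rm u}$ is Hurwitz. Since $(A_{\rm c},B_{\rm c})$ is controllable, pole placement gives $K_{\rm c}$ with $A_{\rm c}+B_{\rm c}K_{\rm c}$ Hurwitz. Set $K:=(K_{\rm c},0)Q^\top$. Then $A+BK$ is orthogonally similar to a block triangular matrix whose two diagonal blocks are Hurwitz, so $A+BK$ is Hurwitz. The only ingredient needing care is the controllability of $(A_{\rm c},B_{\rm c})$, which the paper asserts after \eqref{eq:orthogonal-Kalman}. To check it, note that $Q^\top A^kB=\binom{A_{\rm c}^kB_{\rm c}}{0}$. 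The columns of $Q_{\rm c}$ form a basis of $\mathcal R$, so
\[
\rank[B_{\rm c},\ldots,A_{\rm c}^{n-1}B_{\rm c}]=\dim\mathcal R=n_{\rm c},
\]
and Cayley--Hamilton truncates the powers as needed.

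For (ii), let $Q'=(Q_{\rm c}',Q_{\rm u}')$ be another admissible orthogonal matrix. Its columns $Q_{\rm u}'$ are also an orthonormal basis of $\mathcal R^\perp$. Hence $Q_{\rm u}'=Q_{\rm u}W$ with $W:=Q_{\rm u}^\top Q_{\rm u}'$, and $W$ is orthogonal because it maps one orthonormal basis of $\mathcal R^\perp$ to another. From \eqref{eq:orthogonal-Kalman} we get $A_{\rm u}=Q_{\rm u}^\top AQ_{\rm u}$, and likewise $A_{\rm u}'=Q_{\rm u}'^\top AQ_{\rm u}'=W^\top A_{\rm u}W$. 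So the two uncontrollable blocks are orthogonally similar, and similarity preserves the spectrum and the Jordan block sizes.

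No step is a real obstacle. The only point needing care is the identity $A_{\rm u}=Q_{\rm u}^\top AQ_{\rm u}$: the block $A_{\rm u}$ depends only on $Q_{\rm u}$ and not on $Q_{\rm c}$, which is what makes the comparison in (ii) clean.
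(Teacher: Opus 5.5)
Your proof is correct. The paper gives no argument of its own for this proposition and only cites \cite{Kalman1963,Hautus1970}. Your route is the standard one: the block-triangular spectrum identity for necessity, pole placement on $(A_{\rm c},B_{\rm c})$ with $K=(K_{\rm c},0)Q^\top$ for sufficiency (the same feedback the paper uses in the proof of Proposition~\ref{prop:maximal-rate}), and $Q_{\rm u}'=Q_{\rm u}W$ with $W=Q_{\rm u}^\top Q_{\rm u}'$ orthogonal for (ii). Part (ii) is immediate because the paper defines the decomposition through the canonical subspace $\mathcal R$. Your check that $[B,\ldots,A^{n-1}B]=Q_{\rm c}[B_{\rm c},\ldots,A_{\rm c}^{n-1}B_{\rm c}]$ forces $(A_{\rm c},B_{\rm c})$ to be controllable also fills in a fact the paper only asserts.
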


Since $(A,B)$ is stabilizable but not controllable,
Proposition~\ref{prop:uncontrollable-block}\textup{(i)} applies and shows
that $A_{\rm u}$ is Hurwitz. Let
\begin{equation}\label{eq:su}
    s_{\rm u}
    :=
    \max\{\operatorname{Re}\lambda:\lambda\in\sigma(A_{\rm u})\}<0
\end{equation}
be its spectral abscissa. By \eqref{eq:deltastar},
$\delta_*=-s_{\rm u}>0$. Recall that $q$ is the largest size of a Jordan
block in the complex Jordan form of $A_{\rm u}$ associated with an
eigenvalue $\lambda$ satisfying
\begin{equation}\label{eq:peripheral-line}
    \operatorname{Re}\lambda=s_{\rm u}=-\delta_*,
    \qquad \lambda\in\sigma(A_{\rm u}).
\end{equation}
Since $(A,B)$ is stabilizable but not controllable,
Proposition~\ref{prop:uncontrollable-block}\textup{(ii)} applies. Hence
both $\delta_*$ and $q$ are independent of the choice of orthogonal Kalman
decomposition.

For later use, note that the lower-right block in
\eqref{eq:closed-loop-block} is always $A_{\rm u}$. Hence, for every
$K\in\R^{m\times n}$,
\[
    e^{Q^\top(A+BK)Qt}
    =
    \begin{pmatrix}
        *&*\\
        0&e^{A_{\rm u}t}
    \end{pmatrix},
    \qquad K\in\R^{m\times n},\quad t\ge0.
\]
It follows that
\[
    \norm{e^{Q^\top(A+BK)Qt}}
    \ge
    \norm{e^{A_{\rm u}t}},
    \qquad K\in\R^{m\times n},\quad t\ge0.
\]
Since $Q$ is orthogonal,
\[
    \norm{e^{Q^\top(A+BK)Qt}}
    =
    \norm{e^{(A+BK)t}},
    \qquad K\in\R^{m\times n},\quad t\ge0.
\]
Therefore
\begin{equation}\label{eq:block-lower-bound}
    \norm{e^{(A+BK)t}}
    \ge
    \norm{e^{A_{\rm u}t}},
    \qquad K\in\R^{m\times n},\quad t\ge0.
\end{equation}

Set $H_{\rm u}:=A_{\rm u}+\delta_*I_{n_{\rm u}}$. The following lemma
will be used in the proof of Proposition~\ref{prop:maximal-rate} and
Theorem~\ref{thm:finite-endpoint}. Its proof, based on the complex Jordan
normal form, is included in Appendix~\ref{app:kalman}.

\begin{lemma}\label{lem:Hu-growth}
Assume that $(A,B)$ is stabilizable but not controllable. There exist constants $c_1,C_1>0$ and $T_1\ge1$ such that
\begin{equation}\label{eq:Hu-growth}
    c_1t^{q-1}
    \le
    \norm{e^{H_{\rm u}t}}
    \le
    C_1t^{q-1},
    \qquad t\ge T_1,
\end{equation}
and, after increasing $C_1$ if necessary,
\begin{equation}\label{eq:Hu-global-growth}
    \norm{e^{H_{\rm u}t}}
    \le
    C_1(1+t^{q-1}),
    \qquad t\ge0.
\end{equation}
Moreover, there exist constants $c_2,C_2,\varepsilon_1>0$ such that
\begin{equation}\label{eq:weighted-Hu-growth}
    c_2\varepsilon^{-(q-1)}
    \le
    \sup_{t\ge0}e^{-\varepsilon t}\norm{e^{H_{\rm u}t}}
    \le
    C_2\varepsilon^{-(q-1)},
    \qquad 0<\varepsilon\le\varepsilon_1.
\end{equation}
For $q=1$, the factor $\varepsilon^{-(q-1)}$ is understood as $1$.
\end{lemma}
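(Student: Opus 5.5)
We will work in a complex Jordan basis of $A_{\rm u}$. Choose an invertible $P\in\C^{n_{\rm u}\times n_{\rm u}}$ with $P^{-1}A_{\rm u}P=\operatorname{diag}(\lambda_kI_{m_k}+N_{m_k})_k$, where $N_{m}$ is the nilpotent shift of size $m$. Then $P^{-1}H_{\rm u}P$ is block diagonal with blocks $(\lambda_k+\delta_*)I+N_{m_k}$. By the definition of $\delta_*$, $\operatorname{Re}(\lambda_k+\delta_*)\le0$, with equality exactly on the peripheral blocks, and those blocks have sizes at most $q$, with at least one of size $q$. Since $\norm{P}\norm{P^{-1}}$ is a fixed constant, it suffices to prove all the bounds for this block-diagonal matrix up to constants.

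\textbf{Step 1: bounds \eqref{eq:Hu-growth} and \eqref{eq:Hu-global-growth}.} For a block of size $m$ with eigenvalue $\beta$, we have $e^{(\beta I+N_m)t}=e^{\beta t}\sum_{j<m}t^jN_m^j/j!$. This gives $\norm{e^{(\beta I+N_m)t}}\le e^{\operatorname{Re}\beta\, t}\sum_{j<m}t^j/j!$.

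For peripheral blocks ($\operatorname{Re}\beta=0$, $m\le q$) the bound is at most $C(1+t^{q-1})$. For nonperipheral blocks, $\operatorname{Re}\beta\le-\gamma<0$ for some fixed $\gamma$, so $e^{-\gamma t}\sum_{j<m}t^j/j!$ is bounded on $t\ge0$. Together these give \eqref{eq:Hu-global-growth}, and hence the upper half of \eqref{eq:Hu-growth} for $t\ge1$.

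For the lower bound, take a peripheral block of size exactly $q$. Its $(q,1)$ entry (the corner entry, in the paper's lower-shift convention) is $e^{\beta t}t^{q-1}/(q-1)!$, which has modulus $t^{q-1}/(q-1)!$. Applying this block to $e_1$ therefore gives norm at least $t^{q-1}/(q-1)!$. Transferring back through $P$ costs the factor $\norm{P}^{-1}\norm{P^{-1}}^{-1}$, which is harmless. The case $q=1$ is the same argument with $t^0=1$.

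\textbf{Step 2: the weighted bound \eqref{eq:weighted-Hu-growth}.} For the upper bound, use \eqref{eq:Hu-global-growth}:
\[
e^{-\varepsilon t}\norm{e^{H_{\rm u}t}}\le C_1\bigl(1+t^{q-1}e^{-\varepsilon t}\bigr)\le C_1\bigl(1+((q-1)/e)^{q-1}\varepsilon^{-(q-1)}\bigr).
\]
The last step uses $\max_{t\ge0}t^{k}e^{-\varepsilon t}=(k/(e\varepsilon))^{k}$. For $\varepsilon\le\varepsilon_1:=1$ this is at most $C_2\varepsilon^{-(q-1)}$.

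For the lower bound with $q\ge2$, evaluate at $t=1/\varepsilon$. This requires $1/\varepsilon\ge T_1$, i.e. $\varepsilon\le 1/T_1$. Step 1 then gives a value of at least $e^{-1}c_1\varepsilon^{-(q-1)}$. For $q=1$, evaluate at $t=0$ to get the lower bound $1$.

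So we take $\varepsilon_1=\min\{1,1/T_1\}$ and $c_2=\min\{e^{-1}c_1,1\}$.

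\textbf{Main obstacle.} The only delicate point is the Jordan reduction itself. The form is complex, so $P$ is complex, and we must compare real operator norms with complex ones. This is fine because the Euclidean operator norm of a real matrix acting on $\R^{n_{\rm u}}$ equals its norm acting on $\C^{n_{\rm u}}$. We must also make sure the nonperipheral blocks are uniformly exponentially damped, which follows from finiteness of the spectrum.
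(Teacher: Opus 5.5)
Your proposal is correct and follows essentially the same route as the paper. Both arguments use the complex Jordan reduction of $H_{\rm u}$ and blockwise exponential bounds for the upper estimates. For the lower estimate, both take the corner entry $t^{q-1}/(q-1)!$ of a critical block of size $q$ and transfer it back via the condition number of the similarity. For the weighted estimate, both use the elementary bound on $\sup_{t\ge0}t^{q-1}e^{-\varepsilon t}$ and evaluate at a time of order $1/\varepsilon$; you use $t=1/\varepsilon$, the paper uses $t=(q-1)/\varepsilon$. The remaining differences (lower versus upper shift convention, explicit constants) are cosmetic.
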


The following proposition will be used in the proof of
Theorem~\ref{thm:finite-endpoint}. For completeness, its proof is included
in Appendix~\ref{app:kalman}.

\begin{proposition}\label{prop:maximal-rate}
Assume that $(A,B)$ is stabilizable but not controllable. The following statements hold.
\begin{enumerate}[label=\textup{(\roman*)}]
    \item $\F_\delta\neq\varnothing$ for every $0<\delta<\delta_*$;
    \item $\F_\delta=\varnothing$ for every $\delta>\delta_*$;
    \item $\F_{\delta_*}\neq\varnothing$ if and only if $q=1$.
\end{enumerate}
Moreover,
\begin{equation}\label{eq:maximal-rate}
    \delta_*
    =
    \sup\{\delta>0:\F_\delta\neq\varnothing\}.
\end{equation}
Thus $\delta_{\max}=\delta_*$ in the notation of \eqref{eq:deltamax}.
\end{proposition}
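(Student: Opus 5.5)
The statement to prove is Proposition~\ref{prop:maximal-rate}. The plan is to use the block structure \eqref{eq:closed-loop-block} in both directions: the lower bound \eqref{eq:block-lower-bound} handles the negative statements, and block-triangular feedback design handles the positive ones.

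For (ii) and for the ``only if'' part of (iii), fix any $K$. By \eqref{eq:block-lower-bound},
\[
e^{\delta t}\norm{e^{(A+BK)t}}\ge e^{\delta t}\norm{e^{A_{\rm u}t}}=e^{(\delta-\delta_*)t}\norm{e^{H_{\rm u}t}}.
\]
\begin{itemize}
\item If $\delta>\delta_*$, the lower bound in \eqref{eq:Hu-growth} gives $e^{(\delta-\delta_*)t}c_1t^{q-1}\to\infty$ as $t\to\infty$. Hence $C(\delta,K)=+\infty$ for every $K$, so $\F_\delta=\varnothing$.
\item If $\delta=\delta_*$ and $q\ge2$, the same bound gives growth at least $c_1t^{q-1}\to\infty$, so again $\F_{\delta_*}=\varnothing$.
\end{itemize}

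For (i) and for the ``if'' part of (iii), take $0<\delta\le\delta_*$, with $\delta=\delta_*$ allowed only when $q=1$.
\begin{itemize}
\item Since $(A_{\rm c},B_{\rm c})$ is controllable, choose $K_{\rm c}$ so that $A_{\rm c}+B_{\rm c}K_{\rm c}$ has all eigenvalues with real part less than $-\delta_*-1$. Set $K_{\rm u}=0$ and $K:=(K_{\rm c},0)Q^\top$.
\item The shifted matrix $N:=Q^\top(A+BK)Q+\delta I$ is block upper triangular. Its diagonal blocks are $P:=A_{\rm c}+B_{\rm c}K_{\rm c}+\delta I$, with $\norm{e^{Pt}}\le C_Pe^{-t}$, and $A_{\rm u}+\delta I=H_{\rm u}-(\delta_*-\delta)I$.
\item For the lower-right block, \eqref{eq:Hu-global-growth} gives $\norm{e^{(A_{\rm u}+\delta I)t}}\le C_1(1+t^{q-1})e^{-(\delta_*-\delta)t}$. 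This is bounded when $\delta<\delta_*$, and also when $\delta=\delta_*$ and $q=1$.
\item The off-diagonal block of $e^{Nt}$ is the variation-of-constants integral
\[
\int_0^t e^{P(t-s)}\bigl(A_{\rm cu}+B_{\rm c}K_{\rm u}\bigr)e^{(A_{\rm u}+\delta I)s}\,ds .
\]
Its norm is at most $C_P\norm{A_{\rm cu}}\,M\int_0^te^{-(t-s)}\,ds\le C_P\norm{A_{\rm cu}}M$, where $M$ is the uniform bound on the lower-right block from the previous step.
\end{itemize}
All blocks of $e^{Nt}$ are therefore bounded. Since $Q$ is orthogonal, $C(\delta,K)<\infty$ and $K\in\F_\delta$.

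Finally, \eqref{eq:maximal-rate} follows from (i) and (ii): the set $\{\delta>0:\F_\delta\neq\varnothing\}$ contains $(0,\delta_*)$ and is contained in $(0,\delta_*]$, so its supremum is $\delta_*$. Comparing with \eqref{eq:deltamax} gives $\delta_{\max}=\delta_*$.

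The main obstacle is the off-diagonal coupling. It is handled by placing the controllable spectrum strictly left of $-\delta_*$, which makes the convolution integral uniformly bounded. In the borderline case $q=1$ this bound relies on $e^{H_{\rm u}t}$ being bounded, which is exactly \eqref{eq:Hu-global-growth} with $q=1$.
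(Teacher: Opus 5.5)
Your proof is correct and follows essentially the same route as the paper: the feedback-invariant block lower bound \eqref{eq:block-lower-bound} gives (ii) and the necessity in (iii), and the feedback $K=(K_{\rm c},0)Q^\top$, with the controllable spectrum pushed left of $-\delta_*$, together with a uniformly bounded variation-of-constants integral gives (i) and the sufficiency in (iii). The differences are cosmetic. You use the growth bound \eqref{eq:Hu-growth} instead of the spectral-radius bound for (ii), and you handle (i) with the same convolution estimate rather than by noting that the block-triangular matrix is Hurwitz. You also absorb the degenerate case $n_{\rm c}=0$ into the empty-block convention, where the paper treats it separately.
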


\vskip 5pt
We now prove Theorem~\ref{thm:finite-endpoint}.

\begin{proof}
Since $(A,B)$ is stabilizable but not controllable,
Proposition~\ref{prop:maximal-rate} applies and gives
$\delta_{\max}=\delta_*$. Moreover,
Proposition~\ref{prop:uncontrollable-block}\textup{(ii)} applies. Hence
both $\delta_*$ and $q$ are independent of the choice of orthogonal Kalman
decomposition. Throughout the proof, we use the orthogonal Kalman
decomposition fixed in \eqref{eq:orthogonal-Kalman}.

\medskip
\noindent\textit{Lower bound.}
Fix $\delta\in(0,\delta_*)$ and set
$\varepsilon:=\delta_*-\delta>0$. Let
$K\in\R^{m\times n}$ be arbitrary. By
\eqref{eq:block-lower-bound} and the identity
$A_{\rm u}=H_{\rm u}-\delta_*I_{n_{\rm u}}$,  we obtain
\begin{equation}\label{50-8-31}
    e^{\delta t}\norm{e^{(A+BK)t}}
    \ge
    e^{-\varepsilon t}\norm{e^{H_{\rm u}t}},\;\;t\ge0.
\end{equation}
Taking the supremum over $t\ge0$ in \eqref{50-8-31} and using the
definition of $C(\delta,K)$, we obtain
\begin{equation}\label{51-8-31}
    C(\delta,K)
    \ge
    \sup_{t\ge0}
    e^{-\varepsilon t}\norm{e^{H_{\rm u}t}}.
\end{equation}
Since $K\in\R^{m\times n}$ was arbitrary and the right-hand side of
\eqref{51-8-31} is independent of $K$, taking the infimum over
$K\in\R^{m\times n}$ in \eqref{51-8-31} and using the definition of
$\Chat(\delta)$, we obtain
\begin{equation}\label{54-8-31}
    \Chat(\delta)
    \ge
    \sup_{t\ge0}
    e^{-\varepsilon t}\norm{e^{H_{\rm u}t}}.
\end{equation}

We now estimate the right-hand side of \eqref{54-8-31}.
Since $(A,B)$ is stabilizable but not controllable,
Lemma~\ref{lem:Hu-growth} applies. Hence there exists
$\varepsilon_1>0$ such that, for every
$\varepsilon'\in(0,\varepsilon_1]$,
\begin{equation}\label{52-8-31}
    \sup_{t\ge0}
    e^{-\varepsilon' t}\norm{e^{H_{\rm u}t}}
    \ge
    c_2(\varepsilon')^{-(q-1)},
    \qquad 0<\varepsilon'\le\varepsilon_1.
\end{equation}
Since $\varepsilon=\delta_*-\delta$ tends to $0$ as
$\delta\uparrow\delta_*$, it follows that when $\delta$ is sufficiently
close to $\delta_*$, $0<\varepsilon\le\varepsilon_1$. Taking
$\varepsilon'=\varepsilon$ in \eqref{52-8-31}, we obtain
$$
    \sup_{t\ge0}
    e^{-\varepsilon t}\norm{e^{H_{\rm u}t}}
    \ge
    c_2\varepsilon^{-(q-1)},
    \qquad 0<\varepsilon\le\varepsilon_1.
$$
This, along with \eqref{54-8-31}, yields
\begin{equation}\label{eq:finite-main-lower}
    \Chat(\delta)
    \ge
    c_2\varepsilon^{-(q-1)},
    \qquad
    \delta\in(0,\delta_*),\quad \varepsilon=\delta_*-\delta,
    \quad 0<\varepsilon\le\varepsilon_1.
\end{equation}

\medskip
\noindent\textit{Upper bound.}
If $n_{\rm c}=0$, then $B=0$ and $A_{\rm u}=A$, so
$\Chat(\delta)=\sup_{t\ge0}e^{-\varepsilon t}\norm{e^{H_{\rm u}t}}$.
Thus \eqref{eq:weighted-Hu-growth} gives the required upper bound directly.
Together with \eqref{eq:finite-main-lower}, this proves
\eqref{eq:finite-endpoint-asymptotic} when $n_{\rm c}=0$, and
\eqref{eq:recover-q} follows by the same logarithmic argument used below.
Hence we may assume $n_{\rm c}\ge1$. Fix
$\delta\in(0,\delta_*)$ and set
$\varepsilon:=\delta_*-\delta>0$. Fix $\eta:=1$. Since $(A_{\rm c},B_{\rm c})$ is controllable, by pole
assignment we may choose $K_{\rm c}$ such that, with
$M_{\rm c}:=A_{\rm c}+B_{\rm c}K_{\rm c}$,

$$
    \sigma(M_{\rm c})
    \subset
    \{z\in\C:\operatorname{Re}z<-\delta_*-\eta\}.
$$

Then there exists $M_{\rm c}^{(0)}>0$ such that
\begin{equation}\label{eq:Gc-decay}
\norm{e^{G_{\rm c}t}}
\le
M_{\rm c}^{(0)}e^{-\eta t},
\qquad
t\ge0,
\end{equation}
where $G_{\rm c}:=M_{\rm c}+\delta_*I_{n_{\rm c}}$.

Set $K_0:=(K_{\rm c},0)Q^\top$. Then
$K_0Q=(K_{\rm c},0)$. This, together with
\eqref{eq:closed-loop-block}, yields
\begin{equation}\label{eq:finite-M0}
M_0
:=
Q^\top(A+BK_0)Q
=
\begin{pmatrix}
M_{\rm c}&A_{\rm cu}\\
0&A_{\rm u}
\end{pmatrix}.
\end{equation}
By \eqref{eq:finite-M0} and the definitions of $G_{\rm c}$ and
$H_{\rm u}$, we obtain

$$
    M_0+\delta_*I_n
    =
    \begin{pmatrix}
        G_{\rm c}&A_{\rm cu}\\
        0&H_{\rm u}
    \end{pmatrix}.
$$
The standard formula for the exponential of a block upper-triangular
matrix then gives
\begin{equation}\label{eq:block-semigroup}
e^{(M_0+\delta_*I_n)t}
=
\begin{pmatrix}
e^{G_{\rm c}t}&R(t)\\
0&e^{H_{\rm u}t}
\end{pmatrix},
\qquad t\ge0,
\end{equation}
where
$$
    R(t)
    =
    \int_0^t
    e^{G_{\rm c}(t-s)}
    A_{\rm cu}
    e^{H_{\rm u}s}\,ds,
    \qquad t\ge0.
$$
The above expression, along with \eqref{eq:Gc-decay} and
Lemma~\ref{lem:Hu-growth}, in particular \eqref{eq:Hu-global-growth}, implies
$$
\begin{aligned}
    \norm{R(t)}
    &\le
    M_{\rm c}^{(0)}\norm{A_{\rm cu}}C_1
    \int_0^t e^{-\eta(t-s)}(1+s^{q-1})\,ds\\
    &\le
    \frac{M_{\rm c}^{(0)}\norm{A_{\rm cu}}C_1}{\eta}
    (1+t^{q-1}):=C_R(1+t^{q-1}),
    \qquad t\ge0.
\end{aligned}
$$
Together with \eqref{eq:block-semigroup}, \eqref{eq:Gc-decay}, and
\eqref{eq:Hu-global-growth}, this yields a constant $C_M>0$ such that
\begin{equation}\label{eq:shifted-M0-growth}
\norm{e^{(M_0+\delta_*I_n)t}}
\le
C_M(1+t^{q-1}),
\qquad
t\ge0.
\end{equation}

Meanwhile, by the definition of $M_0$ in \eqref{eq:finite-M0} and the orthogonality
of $Q$, we have

$$
    \norm{e^{(A+BK_0)t}}
    =
    \norm{e^{M_0t}},
    \qquad t\ge0.
$$
This, together with  the definition of $C(\delta,K_0)$ and
$\delta=\delta_*-\varepsilon$, gives
$$
    C(\delta,K_0)
  =
    \sup_{t\ge0}
    e^{\delta t}\norm{e^{M_0t}}
    =
    \sup_{t\ge0}
    e^{-\varepsilon t}
    \norm{e^{(M_0+\delta_*I_n)t}}.
$$
Then, by \eqref{eq:shifted-M0-growth}, we obtain
\begin{equation}\label{61-8-29}
    C(\delta,K_0)
    \le
    C_M
    \sup_{t\ge0}
    e^{-\varepsilon t}(1+t^{q-1}),
    \qquad \varepsilon>0.
\end{equation}

Since $\varepsilon\downarrow0$ as $\delta\uparrow\delta_*$, for $\delta$
sufficiently close to $\delta_*$ we also have
$0<\varepsilon\le1$. By the change of variable $\tau=\varepsilon t$,

$$
\begin{aligned}
    \sup_{t\ge0}e^{-\varepsilon t}(1+t^{q-1})
    &\le
    1+
    \varepsilon^{-(q-1)}
    \sup_{\tau\ge0}e^{-\tau}\tau^{q-1}\\
    &\le
    C_*\varepsilon^{-(q-1)},
    \qquad 0<\varepsilon\le1,
\end{aligned}
$$

for some $C_*>0$ independent of $\varepsilon$. Consequently, by the
definition of $\Chat(\delta)$ and \eqref{61-8-29}, we find
\begin{equation}\label{eq:finite-main-upper}
\Chat(\delta)
\le
C(\delta,K_0)
\le
C_MC_*\varepsilon^{-(q-1)},
\qquad
\delta\in(0,\delta_*),\quad \varepsilon=\delta_*-\delta,
\quad 0<\varepsilon\le1.
\end{equation}

Finally, it follows from \eqref{eq:finite-main-lower} and
\eqref{eq:finite-main-upper} that 
\eqref{eq:finite-endpoint-asymptotic} holds with
$c_0=c_2$, $C_0=C_MC_*$, and
$\varepsilon_0=\min\{\varepsilon_1,1,\delta_*/2\}$,
 all depending only on $(A,B)$.
Taking logarithms in the two inequalities in
\eqref{eq:finite-endpoint-asymptotic}, dividing by
$\log(1/(\delta_*-\delta))$, and letting
$\delta\uparrow\delta_*$, we obtain \eqref{eq:recover-q}.

Finally, Proposition~\ref{prop:maximal-rate}\textup{(iii)} gives
$\F_{\delta_*}\neq\varnothing$ if and only if $q=1$. By the definition of
$\F_{\delta_*}$ and \eqref{eq:Chat-via-CdeltaK},
$\F_{\delta_*}\neq\varnothing$ if and only if
$\Chat(\delta_*)<+\infty$. This proves \eqref{eq:q-one-endpoint}.
\end{proof}

\section{Examples}
\label{sec:examples}

The examples below illustrate two distinct features of the main results. In the controllable case, the asymptotic growth of $\Chat(\delta)$ is determined by the largest controllability index rather than by the state dimension. In the stabilizable but uncontrollable case, the behavior near $\delta_*$ is determined by the largest size of a Jordan block associated with the spectral boundary, rather than by the largest Jordan-block size of the uncontrollable part as a whole.

\medskip
\noindent\textbf{Example 1: same dimension, different controllability structures.}
Consider the following three systems in $\R^5$:
$$
A_1=
\begin{pmatrix}
0&0&0&0&0\\
1&0&0&0&0\\
0&1&0&0&0\\
0&0&1&0&0\\
0&0&0&1&0
\end{pmatrix},
\quad
B_1=
\begin{pmatrix}
1\\0\\0\\0\\0
\end{pmatrix},
\qquad
A_2=
\begin{pmatrix}
0&0&0&0&0\\
1&0&0&0&0\\
0&1&0&0&0\\
0&0&0&0&0\\
0&0&0&1&0
\end{pmatrix},
\quad
B_2=
\begin{pmatrix}
1&0\\
0&0\\
0&0\\
0&1\\
0&0
\end{pmatrix},
$$
$$
A_3=
\begin{pmatrix}
0&0&0&0&0\\
1&0&0&0&0\\
0&0&0&0&0\\
0&0&1&0&0\\
0&0&0&0&0
\end{pmatrix},
\qquad
B_3=
\begin{pmatrix}
1&0&0\\
0&0&0\\
0&1&0\\
0&0&0\\
0&0&1
\end{pmatrix}.
$$
By the definition of the controllability indices, we obtain, respectively,
$$
(5),\qquad (3,2),\qquad (2,2,1).
$$
Hence the corresponding largest controllability indices are $5$, $3$, and $2$. By Theorem~\ref{thm:main-asymptotic}, we obtain that the corresponding optimal stabilization prefactors satisfy, respectively,
$$
\Chat(\delta)\asymp\delta^4,\qquad
\Chat(\delta)\asymp\delta^2,\qquad
\Chat(\delta)\asymp\delta,
\qquad \delta\to+\infty.
$$

Thus the three systems have the same state dimension and are all controllable, but the asymptotic growth orders of their optimal stabilization prefactors are different. This shows that the growth order is determined by the largest controllability index rather than by the state dimension.

\medskip
\noindent\textbf{Example 2: two phenomena at a finite endpoint.}
\smallskip
\noindent\textit{(a) A finite endpoint without blow-up.}
Let
$$
A=\operatorname{diag}(0,-1,-2),
\qquad
B=
\begin{pmatrix}
1\\0\\0
\end{pmatrix}.
$$
The pair $(A,B)$ is stabilizable but not controllable, and its uncontrollable block is
$$
A_{\rm u}=\operatorname{diag}(-1,-2).
$$
By \eqref{eq:deltastar}, we obtain $\delta_*=1$. Since the Jordan blocks of $A_{\rm u}$ associated with the spectral boundary $\operatorname{Re}\lambda=-1$ have size $1$, by the definition of $q$ in Theorem~\ref{thm:finite-endpoint}, we obtain $q=1$. Thus \eqref{eq:q-one-endpoint} shows that the endpoint $\delta_*=1$ is achievable. In this example, a stronger conclusion holds. For $0<\delta\le1$, choose $K_\delta=(-\delta,0,0)$. Then
$$
A+BK_\delta
=
\operatorname{diag}(-\delta,-1,-2).
$$
This, along with \eqref{eq:CdeltaK}, yields
\begin{equation}\label{5.1-9-1}
C(\delta,K_\delta)
=
\sup_{t\ge0}
e^{\delta t}\norm{e^{(A+BK_\delta)t}}
=
1.
\end{equation}
Meanwhile, by \eqref{eq:CdeltaK}, we also have $C(\delta,K)\ge1$ for every feedback $K$. This, along with  \eqref{eq:Chat}
and 
\eqref{5.1-9-1}, yields
$$
\Chat(\delta)=1,
\qquad 0<\delta\le1.
$$

Thus, although $\delta_*=1$ is finite, $\Chat(\delta)$ does not blow up as
$\delta\uparrow\delta_*$; in fact, $\Chat(\delta)=1$ for all
$0<\delta\le1$.

\smallskip
\noindent\textit{(b) The order is determined by the largest Jordan-block size on the spectral boundary.
}

Let
$$
A=
\operatorname{diag}\left(
0,\,
\begin{pmatrix}
-1&0\\
1&-1
\end{pmatrix},
\,
\begin{pmatrix}
-2&0&0&0\\
1&-2&0&0\\
0&1&-2&0\\
0&0&1&-2
\end{pmatrix}
\right),
\qquad
B=e_1\in\R^7.
$$
The pair $(A,B)$ is stabilizable but not controllable, and its uncontrollable block is
$$
A_{\rm u}
=
\operatorname{diag}\left(
\begin{pmatrix}
-1&0\\
1&-1
\end{pmatrix},
\,
\begin{pmatrix}
-2&0&0&0\\
1&-2&0&0\\
0&1&-2&0\\
0&0&1&-2
\end{pmatrix}
\right).
$$
By \eqref{eq:deltastar}, we obtain $\delta_*=1$. The Jordan block associated with the spectral boundary $\operatorname{Re}\lambda=-1$ has size $2$, whereas the Jordan block of size $4$ is associated with the eigenvalue $-2$. Hence, by the definition of $q$ in Theorem~\ref{thm:finite-endpoint}, we obtain $q=2$. By Theorem~\ref{thm:finite-endpoint}, we obtain
$$
\Chat(\delta)
\asymp
(1-\delta)^{-1},
\qquad \delta\uparrow1.
$$

Thus the asymptotic order near $\delta_*$ is determined by the largest Jordan-block size associated with the spectral boundary, rather than by the largest Jordan-block size of the uncontrollable part as a whole.

\section{Appendix}
\label{sec:appendix}

\renewcommand{\thesubsection}{\Alph{subsection}}
\subsection{Proofs of the auxiliary properties in Section~\ref{sec:controllability-brunovsky}}
\label{app:properties}

\begin{proof}[Proof of Property~\ref{lem:remove-redundant-inputs}]
Since $\operatorname{Ran}B_0=\operatorname{Ran}B$, there exist
$V\in\R^{m\times r}$ and $W\in\R^{r\times m}$ such that
$B_0=BV$ and $B=B_0W$. Hence, for every
$L\in\R^{r\times n}$ and $K\in\R^{m\times n}$,
\[
    B_0L=B(VL),
    \qquad
    BK=B_0(WK).
\]
This proves \eqref{eq:closed-loop-family-B0}. Consequently,
\[
    \{A+BK:K\in\R^{m\times n}\}
    =
    \{A+B_0L:L\in\R^{r\times n}\}.
\]
Fix $\delta>0$. It follows that
\[
   \inf_{K\in\R^{m\times n}}
    \sup_{t\ge0}
    e^{\delta t}\norm{e^{(A+BK)t}}
 =
    \inf_{L\in\R^{r\times n}}
    \sup_{t\ge0}
    e^{\delta t}\norm{e^{(A+B_0L)t}}.
\]
Hence replacing $B$ by $B_0$ leaves $\Chat(\delta)$ unchanged.

Moreover, for every $j\ge0$,
\[
    \operatorname{Ran}A^jB
    =
    A^j\operatorname{Ran}B
    =
    A^j\operatorname{Ran}B_0
    =
    \operatorname{Ran}A^jB_0.
\]
Thus the spaces $\mathcal R_j$, and hence $d_j$, $\rho_j$, the
controllability indices, and $\mu$, are unchanged. Since $(A,B)$ is
controllable, $(A,B_0)$ is controllable as well.
\end{proof}

\begin{proof}[Proof of Property~\ref{prop:feedback-decay-correspondence}]
By Theorem~\ref{thm:Brunovsky-canonical-form}, the matrices $T$ and $U$ in
\eqref{eq:Br-feedback-equivalence} are invertible. Solving
\[
    \widetilde K
    =
    U^{-1}(K-K_{\rm pre})T
\]
for $K$ gives
\[
    K
    =
    K_{\rm pre}+U\widetilde K T^{-1},
\]
so $G$ is bijective and its inverse is
\eqref{eq:feedback-correspondence-inverse}. Moreover, by
\eqref{eq:Br-feedback-equivalence},
\[
\begin{aligned}
    A_{\rm Br}+B_{\rm Br}G(K)
    &=
    T^{-1}(A+BK_{\rm pre})T
    +
    T^{-1}BU\,G(K)\\
    &=
    T^{-1}(A+BK_{\rm pre})T
    +
    T^{-1}B(K-K_{\rm pre})T\\
    &=
    T^{-1}(A+BK)T,
\end{aligned}
\]
which proves \eqref{eq:closed-loop-similarity}.
\end{proof}

\begin{proof}[Proof of Property~\ref{prop:Br-reduction}]
Set
\begin{equation}\label{33-8-30}
    \kappa:=\norm{T}\norm{T^{-1}},
\end{equation}
where $T$ is as in
Property~\ref{prop:feedback-decay-correspondence}. Since $T$ is invertible,
$\kappa\ge1$. By \eqref{eq:closed-loop-similarity},
\begin{equation}\label{34-8-30}
    e^{(A_{\rm Br}+B_{\rm Br}G(K))t}
    =
    T^{-1}e^{(A+BK)t}T,
    \qquad t\ge0.
\end{equation}
Hence
\begin{equation}\label{36-8-30}
    C_{\rm Br}(\delta,G(K))
    \le
    \kappa C(\delta,K),
    \qquad \delta>0,\quad K\in\R^{r\times n},
\end{equation}
and, conversely,
\begin{equation}\label{39-8-30}
    C(\delta,K)
    \le
    \kappa C_{\rm Br}(\delta,G(K)),
    \qquad \delta>0,\quad K\in\R^{r\times n}.
\end{equation}
Taking the infimum over $K$ and using the bijectivity of $G$ from
Property~\ref{prop:feedback-decay-correspondence} gives
\[
    \frac1{\kappa}\Chat_{\rm Br}(\delta)
    \le
    \Chat(\delta)
    \le
    \kappa\Chat_{\rm Br}(\delta),
    \qquad \delta>0,
\]
which is \eqref{eq:Chat-comparison}.
\end{proof}

\subsection{Auxiliary results for the finite-endpoint analysis}
\label{app:kalman}

We give the proofs of Lemma~\ref{lem:Hu-growth} and
Proposition~\ref{prop:maximal-rate}.

\begin{proof}[Proof of Lemma~\ref{lem:Hu-growth}]
We work with the complexification of $H_{\rm u}$; the Euclidean operator
norm of a real matrix is unchanged by complexification. Choose an
invertible matrix $S$ such that $J:=S^{-1}H_{\rm u}S$ is in complex Jordan
form. By the definitions of $\delta_*$ and $q$, every eigenvalue of
$H_{\rm u}$ has nonpositive real part, and the largest Jordan block among
the eigenvalues on the imaginary axis has size $q$.

For a Jordan block $J_\lambda=\lambda I_d+N_d$,

$$
    e^{J_\lambda t}
    =
    e^{\lambda t}
    \sum_{k=0}^{d-1}\frac{t^k}{k!}N_d^k,
    \qquad t\ge0.
$$

Hence every block with $\operatorname{Re}\lambda=0$ grows at most like
$t^{q-1}$, whereas every block with $\operatorname{Re}\lambda<0$ is
bounded by an exponentially decaying polynomial. Since
$e^{H_{\rm u}t}=Se^{Jt}S^{-1}$, this gives the upper bounds in
\eqref{eq:Hu-growth} and \eqref{eq:Hu-global-growth}.

Choose a Jordan block of size $q$ associated with an eigenvalue on the
imaginary axis. For the last standard basis vector of this block, the
first component of $e^{J_\lambda t}e_q$ has modulus
$t^{q-1}/(q-1)!$. Hence
$\norm{e^{Jt}}\ge t^{q-1}/(q-1)!$. Since
$e^{Jt}=S^{-1}e^{H_{\rm u}t}S$, it follows that

$$
\norm{e^{H_{\rm u}t}}
\ge
\frac{1}{\norm{S}\norm{S^{-1}}}
\frac{t^{q-1}}{(q-1)!},
\qquad t\ge0.
$$

This gives the lower bound in \eqref{eq:Hu-growth} for all sufficiently
large $t$.

The upper bound in \eqref{eq:weighted-Hu-growth} follows from
\eqref{eq:Hu-global-growth} and the elementary estimate
$\sup_{t\ge0}e^{-\varepsilon t}t^{q-1}
\le C\varepsilon^{-(q-1)}$. If $q\ge2$, the lower bound follows from
\eqref{eq:Hu-growth} by taking $t=(q-1)/\varepsilon$ for all sufficiently
small $\varepsilon>0$. If $q=1$, the upper bound follows from
\eqref{eq:Hu-global-growth}, while the lower bound follows by taking
$t=0$. This proves \eqref{eq:weighted-Hu-growth}.
\end{proof}

\begin{proof}[Proof of Proposition~\ref{prop:maximal-rate}]
Since $(A,B)$ is stabilizable but not controllable,
Proposition~\ref{prop:uncontrollable-block}\textup{(i)} applies and shows
that $A_{\rm u}$ is Hurwitz.

We first consider the case $n_{\rm c}=0$. Then $B=0$ and $A_{\rm u}=A$,
so $C(\delta,K)=C(\delta,0)$ for every $K\in\R^{m\times n}$. If
$0<\delta<\delta_*$, the matrix $A+\delta I_n$ is Hurwitz, so
$0\in\F_\delta$. If $\delta>\delta_*$, then
$\norm{e^{At}}\ge\operatorname{spr}(e^{At})=e^{-\delta_*t}$, and hence
$C(\delta,0)=+\infty$. Finally, at $\delta=\delta_*$,
$e^{\delta_*t}e^{At}=e^{H_{\rm u}t}$, which is uniformly bounded if and
only if $q=1$ by Lemma~\ref{lem:Hu-growth}. Thus \textup{(i)--(iii)} hold
when $n_{\rm c}=0$, and \eqref{eq:maximal-rate} follows from
\textup{(i)} and \textup{(ii)}.

We may therefore assume $n_{\rm c}\ge1$. Since $(A_{\rm c},B_{\rm c})$ is
controllable, choose $K_{\rm c}$ so that
$M_{\rm c}:=A_{\rm c}+B_{\rm c}K_{\rm c}$ has spectrum in
$\operatorname{Re}z<-\delta_*-\eta$ for some $\eta>0$, and set
$K_0:=(K_{\rm c},0)Q^\top$. Then $K_0Q=(K_{\rm c},0)$, and

$$
    Q^\top(A+BK_0)Q
    =
    \begin{pmatrix}
        M_{\rm c}&A_{\rm cu}\\
        0&A_{\rm u}
    \end{pmatrix}.
$$

If $0<\delta<\delta_*$, both diagonal blocks of
$Q^\top(A+BK_0)Q+\delta I_n$ are Hurwitz. Hence the whole block upper
triangular matrix is Hurwitz, and therefore $K_0\in\F_\delta$. This proves
\textup{(i)}.

For $\delta>\delta_*$, \eqref{eq:block-lower-bound} and
$\norm{e^{A_{\rm u}t}}\ge
\operatorname{spr}(e^{A_{\rm u}t})=e^{-\delta_*t}$ give
$\sup_{t\ge0}e^{\delta t}\norm{e^{(A+BK)t}}=+\infty$ for every feedback
$K$. Thus $\F_\delta=\varnothing$, proving \textup{(ii)}.

At $\delta=\delta_*$, since $(A,B)$ is stabilizable but not controllable,
Lemma~\ref{lem:Hu-growth} applies. Moreover,
$e^{\delta_*t}e^{A_{\rm u}t}=e^{H_{\rm u}t}$, and by that lemma this
semigroup is uniformly bounded if and only
if $q=1$. Hence \eqref{eq:block-lower-bound} shows that $q=1$ is necessary
for $\F_{\delta_*}\neq\varnothing$.

Assume $q=1$ and set
$G_{\rm c}:=M_{\rm c}+\delta_*I_{n_{\rm c}}$. Then
$\norm{e^{G_{\rm c}t}}\le M_{\rm c}^{(0)}e^{-\eta t}$ for some
$M_{\rm c}^{(0)}>0$, while Lemma~\ref{lem:Hu-growth}, in particular
\eqref{eq:Hu-global-growth}, gives
$\norm{e^{H_{\rm u}t}}\le M_{\rm u}^{(0)}$ for some
$M_{\rm u}^{(0)}>0$. The upper-right block of the shifted semigroup is

$$
    \int_0^t e^{G_{\rm c}(t-s)}A_{\rm cu}e^{H_{\rm u}s}\,ds,
    \qquad t\ge0,
$$

whose norm is uniformly bounded for $t\ge0$. Therefore
$e^{\delta_*t}e^{(A+BK_0)t}$ is uniformly bounded for $t\ge0$, so
$K_0\in\F_{\delta_*}$. This proves \textup{(iii)}. Finally,
\eqref{eq:maximal-rate} follows immediately from \textup{(i)} and
\textup{(ii)}.
\end{proof}


\begin{thebibliography}{99}

\bibitem{ApkarianNoll2025}
P.~Apkarian and D.~Noll,
\newblock Minimizing transients via the Kreiss system norm,
\newblock \emph{Journal of the Franklin Institute}, 362 (2025), 108208.

\bibitem{Brunovsky1970}
P.~Brunovsk\'y,
\newblock A classification of linear controllable systems,
\newblock \emph{Kybernetika}, 6 (1970), 173--188.

\bibitem{Hautus1970}
M.~L.~J.~Hautus,
\newblock Stabilization, controllability and observability of linear
autonomous systems,
\newblock \emph{Indagationes Mathematicae (Proceedings)}, 73 (1970),
448--455.

\bibitem{HPW2002}
D.~Hinrichsen, E.~Plischke, and F.~Wirth,
\newblock State feedback stabilization with guaranteed transient bounds,
\newblock in \emph{Proceedings of the Symposium on Mathematical Theory of
Networks and Systems (MTNS 2002)}, South Bend, Indiana, 2002, paper 2132.

\bibitem{Kalman1963}
R.~E.~Kalman,
\newblock Mathematical description of linear dynamical systems,
\newblock \emph{Journal of the Society for Industrial and Applied Mathematics,
Series A: Control}, 1 (1963), 152--192.

\bibitem{LiZhouCao2026}
X.~Li, B.~Zhou, and J.~Cao,
\newblock On bounds of the solution to the parametric Lyapunov equations with applications,
\newblock \emph{Applied Mathematics and Computation}, 531 (2026), 130172.

\bibitem{PW2004}
E.~Plischke and F.~Wirth,
\newblock Stabilization of linear systems with prescribed transient bounds,
\newblock in \emph{Proceedings of the 16th International Symposium on
Mathematical Theory of Networks and Systems (MTNS 2004)}, Leuven, 2004.

\bibitem{Seidman1988}
T.~I.~Seidman,
\newblock How violent are fast controls?,
\newblock \emph{Mathematics of Control, Signals, and Systems}, 1 (1988),
89--95.

\bibitem{SeidmanYong1996}
T.~I.~Seidman and J.~Yong,
\newblock How violent are fast controls?, II,
\newblock \emph{Mathematics of Control, Signals, and Systems}, 9 (1996),
327--340.

\bibitem{SWXY2006}
Y.~G.~Sun, L.~Wang, G.~Xie, and M.~Yu,
\newblock Improved overshoot estimation in pole placements and its
application in observer-based stabilization for switched systems,
\newblock \emph{IEEE Transactions on Automatic Control}, 51 (2006),
1962--1966.

\bibitem{WhidborneMcKernan2007}
J.~F.~Whidborne and J.~McKernan,
\newblock On the minimization of maximum transient energy growth,
\newblock \emph{IEEE Transactions on Automatic Control}, 52 (2007),
1762--1767.

\bibitem{WonhamMorse1972}
W.~M.~Wonham and A.~S.~Morse,
\newblock Feedback invariants of linear multivariable systems,
\newblock \emph{Automatica}, 8 (1972), 93--100.

\end{thebibliography}
\end{document}